\documentclass[12pt]{article}

\usepackage{amsfonts,amssymb,amsmath,exscale,relsize,enumitem,amsthm,mathtools, physics}
\usepackage{graphicx}
\usepackage{float}
\usepackage[thinlines]{easytable}
\usepackage{csquotes}
\usepackage{setspace}

\usepackage[numbers]{natbib}

\usepackage[
colorlinks=true,
citecolor=blue,
linkcolor=blue,
urlcolor=blue
]{hyperref}

\usepackage{varioref}
\usepackage{subcaption}
\usepackage{xcolor}
\usepackage{setspace}
\usepackage{appendix}
\usepackage[margin=1.35in]{geometry}
\usepackage{setspace} 

\DeclareMathOperator*{\argmax}{argmax}

\newtheorem{remark}{Remark}

\mathchardef\mhyphen="2D

\newtheorem{thm}{Theorem}[section]
\newtheorem{lem}{Lemma}[section]
\newtheorem{cor}{Corollary}[section]

\numberwithin{equation}{section}

\title{Admissibility of Adaptive Monotone Step-Down Multiple Testing Procedures Under Arbitrary Covariance Dependence}

\author{%
	Prasenjit Ghosh\thanks{Department of Statistics, Texas A\&M University, College Station, TX 77843, USA. Email: \texttt{prasenjit@stat.tamu.edu}}%
	\hspace{1em} 
	Arijit Chakrabarti\thanks{Applied Statistics Unit, Indian Statistical Institute, Kolkata - 700108, India. Email: \texttt{arc@isical.ac.in}}%
}

\date{} 

\newcommand{\keywords}[1]{%
	\par\addvspace\baselineskip
	\noindent\textbf{Keywords: }#1
}

\begin{document}
	
	\maketitle

\begin{abstract}
In this paper, we consider the problem of simultaneous testing of multivariate normal means under arbitrary covariance dependence. Specifically, let $\boldsymbol{X}\sim N_n(\boldsymbol{\theta},\boldsymbol{\Sigma})$, where $\boldsymbol{\theta}\in\mathbb{R}^n$ is unknown and $\boldsymbol{\Sigma}$ is a known positive definite covariance matrix. The objective is to test $H_{0i}:\theta_i=0$ against $H_{Ai}:\theta_i\neq 0$, simultaneously for $i=1,\ldots,n$. We establish a general admissibility theorem for a broad class of monotone residual-based step-down multiple testing procedures which iteratively rank the active hypotheses using statistics obtained through locally adaptive strictly increasing transformations of suitably standardized residual statistics arising from conditional normal distributions. Our main result shows that every such procedure is admissible with respect to a vector-valued loss function whose components are the usual individual $0$--$1$ testing losses. The proof relies on a delicate geometric analysis of the induced acceptance regions together with structural invariance properties of the adaptive stagewise rejection indices. The theorem substantially extends the admissibility theory developed for the maximum residual down procedure of \citet{COHEN_SACK_XU_2009} and reveals that admissibility under dependence is fundamentally driven by the monotone ordering structure induced by the residual statistics rather than by the precise functional form of the testing rule itself.
\end{abstract}

\keywords{admissibility; arbitrary dependence; decision theory; multiple testing; residual-based step-down procedures; residual statistics; vector loss.}

\vspace{0.4cm}
\noindent\textbf{MSC 2020 subject classifications:} Primary 62C15, 62F15; secondary 62C25, 62H15.

\section{Introduction}
Multiple hypothesis testing has emerged as one of the central themes in modern
statistical inference, particularly in the analysis of high-dimensional datasets
arising in genomics, bioinformatics, brain imaging, astronomy, economics,
finance, medicine, and related scientific disciplines. In many contemporary
applications, a very large number of hypotheses must be tested simultaneously
in order to identify a relatively small number of significant signals hidden
among a vast collection of null effects. Such large-scale inferential problems
require statistical procedures that effectively distinguish true signals from
noise while maintaining adequate control over erroneous decisions at a global
level.

The need for simultaneous inference naturally introduces multiplicity issues,
since procedures designed for individual testing problems may accumulate errors
rapidly when applied repeatedly across a large collection of hypotheses.
Consequently, a vast literature has developed on multiple testing procedures
controlling global error criteria such as the family-wise error rate (FWER)
and the false discovery rate (FDR); see, for example, \citet{BH1995},
\citet{BL1999}, \citet{BY2001}, \citet{STO_2002}, \citet{STS_2004},
\citet{BKY2006}, \citet{BLROQ2009}, \citet{SUN_CAI_2009}, and the references
therein. While classical FWER-controlling procedures often become excessively
conservative in large-scale settings, FDR-based methodologies typically provide
a more flexible trade-off between false discoveries and false non-discoveries,
and have therefore become central to modern large-scale multiple testing.

A substantial portion of the theoretical development in this area initially relied on the assumption that the underlying test statistics are independent. However, in many scientific applications, the test statistics exhibit significant dependence arising from spatial, temporal, biological, or network-driven interactions. Ignoring such dependence may severely distort the behavior of standard multiple testing procedures. Several authors have demonstrated that, under strong dependence, traditional \(p\)-value based stepwise procedures may exhibit highly unstable behavior, leading to excessive variability in the numbers of false discoveries and false non-discoveries; see, for instance, \citet{GGQY2007}, \citet{KY2007}, \citet{QBKY2005}, \citet{QKY2005}, and \citet{QXGY2007}. Moreover, \citet{Efron_2007} emphasized that failure to incorporate dependence may produce substantially misleading inferences, particularly in highly correlated settings. On the other hand, dependence may also provide additional structural information that can potentially be exploited to improve inferential efficiency. In this sense, dependence may become a ``blessing'' rather than a ``curse''; see \citet{HJ2010}, \citet{BH2000}, and \citet{GRW2006}.

These considerations have motivated a growing literature on multiple testing procedures that explicitly account for dependence structures among the test
statistics; see, for instance, \citet{BY2001}, \citet{ROM_SHK_WOLF}, \citet{SKS2008}, \citet{LEEK_STO_2008}, \citet{BLROQ2009}, \citet{FRI_KLO_CAU_2009}, \citet{SUN_CAI_2009}, and the references therein. Nevertheless, despite substantial progress on controlling global
error criteria such as FWER and FDR under various forms of dependence,
comparatively less attention has been devoted to the decision-theoretic validity
of such procedures. As emphasized by \citet{COHEN_SACK_2005_A},
\citet{DSB2003}, and \citet{FS2002}, the study of admissibility and related
optimality properties is important not only for comparing competing procedures,
but also for understanding the deeper structural principles governing multiple
testing rules viewed as compound decision problems.

This issue becomes particularly significant under dependence. While several commonly used procedures continue to possess desirable global type I error controlling properties in dependent settings, a number of authors have shown that many standard \(p\)-value based stepwise procedures become inadmissible under natural vector-valued loss functions when the test statistics are correlated; see \citet{COHEN_KOL_SACK_2007}, \citet{COHEN_SACK_2005_B}, \citet{COHEN_SACK_2007}, and \citet{COHEN_SACK_2008}. Inadmissibility indicates the existence of alternative procedures which perform at least as well under every parameter configuration and strictly better under some configurations. Thus, admissibility provides a fundamental decision theoretic validation that a procedure cannot be uniformly improved upon.

A major development in this direction was the Maximum Residual Down (MRD)
procedure proposed by \citet{COHEN_SACK_XU_2009}, which explicitly incorporates the
dependence structure through adaptively formed residual statistics arising from
conditional normal distributions. The MRD procedure was shown to possess an
important admissibility property under a natural vector-valued loss formulation,
thereby providing one of the very few known examples of a nontrivial admissible
stepwise multiple testing procedure under arbitrary covariance dependence.

%

However, the admissibility proof for the MRD procedure is closely tied to the
specific structure of the residual statistics employed there, and the underlying
structural principle responsible for admissibility remains somewhat implicit.
In particular, because the residual statistics are constructed through adaptive
stagewise index selections depending on the observed data vector itself, the
associated acceptance regions possess highly nontrivial data-dependent geometry.
As a consequence, it is not immediately clear whether admissibility is a special
feature of the precise MRD construction itself, or whether it reflects a deeper
structural phenomenon associated with covariance-adapted residual geometry
and more general locally adaptive residual-based step-down mechanisms.

The residual statistics underlying the MRD procedure naturally induce a
stagewise ordering of the active hypotheses according to the strength of the
corresponding residual evidence against the null hypotheses. It is therefore
natural to ask whether admissibility fundamentally depends on the precise
algebraic form of the MRD statistics themselves, or rather on the monotone
ordering structure generated by these covariance-adjusted residual quantities.
Since strictly increasing transformations preserve the local ordering of residual
evidence, they provide a natural framework for investigating the structural
stability of admissibility under more general adaptive residual-based scoring
mechanisms.

Motivated by these considerations, the present paper investigates admissibility
properties of a broad class of residual-based step-down multiple testing procedures under arbitrary dependence structures within a multivariate Gaussian modeling framework. The present paper identifies and formalizes a fundamental monotone ordering structure underlying admissibility in this setting and develops a rigorous invariance-based framework for analyzing the adaptive geometry induced by the step-down mechanism. Unlike classical p-value based procedures which often treat dependence primarily as a nuisance requiring correction, the residual-based framework underlying the present work actively exploits the covariance structure through adaptive conditional residualization at each stage of the procedure. The resulting residual statistics dynamically incorporate information from previously eliminated coordinates and therefore evolve in a manner intrinsically tied to the underlying dependence geometry. In this sense, the proposed framework may be viewed not merely as dependence-robust, but fundamentally dependence-adaptive. Our analysis reveals a remarkable invariance phenomenon along suitable admissibility directions: despite the highly adaptive and combinatorial nature of the procedure, the stagewise elimination sequence among the competing coordinates remains invariant locally along the direction of admissibility. This invariance structure yields a tractable geometric characterization of the induced
acceptance regions. In particular, we show that a remarkably broad class of highly heterogeneous
and locally adaptive residual scoring mechanisms based on suitably standardized
residuals arising from conditional normal distributions gives rise to admissible
step-down multiple testing procedures under arbitrary covariance dependence.

Although each generalized residual score is obtained through a strictly increasing transformation of the corresponding MRD residual magnitude, the locally adaptive nature of the transformations allows the induced stagewise rejection ordering itself to differ substantially from that of the original MRD procedure. The result substantially extends the existing admissibility theory for dependent multiple testing procedures and provides a unified decision-theoretic
justification for a broad class of adaptive residual-based step-down testing rules, including the MRD procedure as an important special case. Importantly, the generalized procedures considered here need not preserve either the adaptive stagewise rejection ordering or the threshold structure of the original MRD procedure. Thus, admissibility persists far beyond exact structural equivalence with the MRD rule itself. The nontriviality of the present framework arises fundamentally from the adaptive recursive structure of the residual-based step-down mechanism itself, since even globally monotone residual transformations may generate substantially different future residual systems and rejection paths through changes in the adaptive conditioning histories. This reveals that admissibility under dependence is fundamentally driven not by the precise algebraic form of the testing rule, but rather by the underlying covariance-adapted residual geometry and the associated monotone ordering structure. It is worth emphasizing that the present framework is entirely non-asymptotic and does not rely on any sparsity assumptions, weak dependence conditions, or large-sample approximations. The admissibility results established here hold for arbitrary finite dimensions and general positive definite covariance structures.


The remainder of the paper is organized as follows. In Section 2, we formulate the multiple testing problem, introduce the vector loss framework, and describe the class of monotone residual-based step-down procedures considered in this paper. Section 3 develops the main admissibility theory together with the underlying structural results. Concluding remarks and several directions for future research are presented in Section 4. The proofs of several key auxiliary results instrumental in establishing the main theorem are deferred to the Appendix.

\section{Problem Formulation and Residual-Based Procedures}

In this paper, we consider the problem of simultaneous testing for means of a set of jointly normal variables. Towards that, let us assume that we observe a random vector $\boldsymbol{X}=(X_1,\dots,X_n)$ (obtained through some suitable transformation, if necessary) such that $\boldsymbol{X} \sim N_{n}(\boldsymbol{\theta},\boldsymbol{\Sigma})$ where  $\boldsymbol{\theta}=(\theta_1,\dots,\theta_n)$ is the vector of unknown means and $\boldsymbol{\Sigma}=((\sigma_{ij}))$ is an $n \times n$ known positive definite matrix with an arbitrary covariance structure. We are interested in testing simultaneously 
\begin{equation}\label{TESTING_PROBLEM}
	H_{0i}:\theta_{i}=0 \mbox{ against } H_{Ai}:\theta_{i}\neq 0, \mbox{ for } i=1,\dots,n.
\end{equation}

Note that since $\boldsymbol{\Sigma}$ is known, without loss of generality, one may assume $\boldsymbol{\Sigma}$ to be the correlation matrix of the $X_i$'s so that $X_i \sim N(\theta_{i},1)$ for each $i=1,\dots,n$. This is so since if
\begin{eqnarray}\label{DIAG_MATRIX}
	\boldsymbol{D}
	&=& \begin{pmatrix} \sigma_{11} & 0 & 0 & \dots & 0 \\ 0 & \sigma_{22} & 0 & \dots & 0 \\ \vdots \\ 0 & 0 & 0 & \dots & \sigma_{nn}\end{pmatrix},
\end{eqnarray}
then letting $\boldsymbol{U}=\boldsymbol{D}^{-1/2}\boldsymbol{X}$ we have $\boldsymbol{U}\sim N_{n}(\boldsymbol{\mu}, \boldsymbol{\Lambda})$, where $\boldsymbol{\mu} = \boldsymbol{D}^{-1/2} \boldsymbol{\theta}$ and $\boldsymbol{\Lambda}=\boldsymbol{D}^{-1/2}\boldsymbol{\Sigma} \boldsymbol{D}^{-1/2}$ is simply the correlation matrix of $\boldsymbol{X}$. Therefore, testing $H^{'}_{0i}:\mu_i=0\mbox{ against } H^{'}_{Ai}:\mu_i\neq 0$ simultaneously for $i=1,\dots,n$, is equivalent to the original testing problem (\ref{TESTING_PROBLEM}).

We now introduce the Maximum Residual Down (MRD) procedure proposed by \citet{COHEN_SACK_XU_2009}, which serves as the canonical residual-based
step-down procedure under arbitrary covariance dependence. For that, we adopt here similar convention of notations used in \citet{COHEN_SACK_XU_2009}. 

Let $\boldsymbol{X}^{(i_{1},\dots,i_{t})}$ be an $(n-t)\times1$ vector consisting of those components of $\boldsymbol{X}=(X_1,\dots,X_n)$ with $(X_{i_{1}},\dots,X_{i_{t}})$ left out. Suppose $\boldsymbol{\Sigma}_{(i_{1},\dots,i_{t})} $ is the $(n-t)\times(n-t)$ sub-matrix obtained after eliminating the $i_{1},\dots,i_{t}$-th rows and the corresponding columns of $\boldsymbol{\Sigma}$. Let $\boldsymbol{\sigma}_{(j)}^{(i_{1},\dots,i_{t})} $ be the $(n-t-1)\times1$ vector obtained by eliminating the $i_{1},\dots,i_{t}$-th and $j$-th elements of the $j$-th column vector of $\boldsymbol{\Sigma}$. Define
\begin{eqnarray}
	\sigma_{j\cdot(i_{1},\dots,i_{t})} &=& \sigma_{jj} - {\boldsymbol{\sigma}_{(j)}^{(i_{1},\dots,i_{t})}}^{T}{\boldsymbol{\Sigma}^{-1}_{(i_1,\dots,i_t,j)}}{\boldsymbol{\sigma}_{(j)}^{(i_{1},\dots,i_{t})}}. \nonumber
\end{eqnarray}

The MRD procedure is based on a collection of adaptively formed residual
statistics defined as
\begin{eqnarray}\label{MRD_STATISTICS}
	U_{tj}^{(i_1,\dots,i_{t-1})}(\boldsymbol{X}) &=& \frac{X_j - {\boldsymbol{\sigma}_{(j)}^{(i_1,\dots,i_{t-1})}}^{T}{\boldsymbol{\Sigma}^{-1}_{(i_1,\dots,i_{t-1},j)}}{\boldsymbol{X}^{(i_1,\dots,i_{t-1},j)}}}{\sigma^{\frac{1}{2}}_{j\cdot(i_1,\dots,i_{t-1})}},\nonumber
\end{eqnarray}
for $t,j=1,\dots,n$, $1\leqslant i_{1}\neq\dots\neq i_{t-1}\leqslant n$ and $i_{l}\neq j$ for all $l=1,\dots,t-1$.

For $1 \leqslant t \leqslant n$, we define the index $\widetilde{j}_t(\boldsymbol{X})$ as
\begin{align}\label{MRD_INDEX}
	\widetilde{j}_t(\boldsymbol{X})&=\argmax_{j \in \{1,\dots,n\}\setminus\{\widetilde{j}_1(\boldsymbol{X}),\dots,\widetilde{j}_{t-1}(\boldsymbol{X})\}} |U^{(\widetilde{j}_1(\boldsymbol{X}),\dots,\widetilde{j}_{t-1}(\boldsymbol{X}))}_{tj}(\boldsymbol{X})|.
\end{align}

Given a set of positive constants $C_1 \geqslant C_2 \geqslant \dots \geqslant C_n$, the MRD method works in a step-down manner as follows:
\begin{enumerate}
	\item At stage 1, consider the statistics $|U_{1j}(\boldsymbol{X})|$, where $j \in \{1,\dots,n\}$. If $|U_{1\widetilde{j}_1(\boldsymbol{X})}(\boldsymbol{X})| \leqslant C_1, $ stop and accept all $H_{0i}$'s. Otherwise reject $H_{0\widetilde{j}_1(\boldsymbol{X})}$ and continue to stage 2.
	
	\item At stage 2, consider the statistics $|U^{(\widetilde{j}_1(\boldsymbol{X}))}_{2\widetilde{j}_1(\boldsymbol{X})}(\boldsymbol{X})|$, where $j\in\{1,\dots,n \}\setminus\{\widetilde{j}_1(\boldsymbol{X})\}$. If $|U^{(\widetilde{j}_1(\boldsymbol{X}))}_{2\widetilde{j}_2(\boldsymbol{X})}(\boldsymbol{X})| \leqslant C_2, $ stop and accept all the remaining $H_{0i}$'s. Otherwise, reject $H_{0\widetilde{j}_2(\boldsymbol{X})}$ and continue to stage 3.
	
	\item In general, at stage $t$, consider the statistics $|U^{(\widetilde{j}_1(\boldsymbol{X}),\dots,\widetilde{j}_{t-1}(\boldsymbol{X}))}_{tj}(\boldsymbol{X})|$, where $j\in\{1,\dots,n\}\setminus\{\widetilde{j}_1(\boldsymbol{X}),\dots,\widetilde{j}_{t-1}(\boldsymbol{X})\}$. If $|U^{(\widetilde{j}_1(\boldsymbol{X}),\dots,\widetilde{j}_{t-1}(\boldsymbol{X}))}_{t\widetilde{j}_t(\boldsymbol{X})}(\boldsymbol{X})| \leqslant C_t$, stop and accept all the remaining $H_{0i}$'s. Otherwise, reject $H_{0\widetilde{j}_t(\boldsymbol{X})}$ and move to stage $(t+1)$.
	
	\item We continue in this fashion until an acceptance occurs or there are no more null hypotheses to be tested, in which case we must stop.
\end{enumerate}
The primary objective of the present paper is to identify the broader structural
principle underlying admissibility of residual-based step-down procedures under
dependence. Observe that the MRD procedure ranks the active hypotheses at each
stage according to the magnitudes of the corresponding residual statistics
\[
\left|
U_{tj}^{(i_1,\dots,i_{t-1})}(\boldsymbol{X})
\right|.
\]
This observation naturally leads to consideration of a more general class of
residual-based step-down procedures obtained through monotone transformations
of these absolute residuals.

The monotonicity assumption is natural from both statistical and decision-theoretic
perspectives. Intuitively, larger values of
\[
\left|
U_{tj}^{(i_1,\dots,i_{t-1})}(\boldsymbol{X})
\right|
\]
correspond to stronger residual evidence against the associated null hypothesis,
and hence any reasonable residual-based testing mechanism should respond
monotonically to such evidence. Importantly, the admissibility theory developed
in this paper does not rely on any globally fixed residual scoring rule. Rather,
the main result shows that admissibility is fundamentally driven by the underlying
covariance-adapted residual geometry together with the local monotone manner
in which evidence accumulates through the residual statistics.


Let
\[
h_{tj}^{(i_1,\dots,i_{t-1})}:[0,\infty)\to\mathbb{R}
\]
be any strictly increasing function, where the functional form may locally depend on the
stage $t$, the coordinate $j$, and the previously eliminated indices $(i_1,\dots,i_{t-1})$. For each stage $t$ and each active index $j$, define the generalized residual statistic
\begin{align}
	S_{tj}^{(i_1,\dots,i_{t-1})}(\boldsymbol{X})
	=
	h_{tj}^{(i_1,\dots,i_{t-1})}
	\left(
	\left|
	U_{tj}^{(i_1,\dots,i_{t-1})}(\boldsymbol{X})
	\right|
	\right),
	\label{GENERAL_STATISTIC_DEFN}
\end{align}
for $t,j=1,\dots,n,$ $1\leqslant i_{1}\neq\dots\neq i_{t-1}\leqslant n$ and $i_{l} \neq j$ for all $l=1,\dots,t-1$.

Importantly, the transformation functions are allowed to vary locally with the
stage, the coordinate under consideration, and the previously eliminated indices.
Thus, the proposed framework permits highly heterogeneous and adaptive residual
scoring mechanisms rather than a single global monotone transformation applied
uniformly across all stages and coordinates. In particular, the framework permits stage-dependent, coordinate-dependent,
and history-dependent residual scoring mechanisms within a unified admissible
step-down formulation.

For $1\leqslant t\leqslant n$, define the indices
\begin{align}\label{GENERAL_INDEX}
	j_t(\boldsymbol{X})
	=
	\argmax_{
		j\in\{1,\dots,n\}\setminus
		\{j_1(\boldsymbol{X}),\dots,j_{t-1}(\boldsymbol{X})\}
	}
	S_{tj}^{(j_1(\boldsymbol{X}),\dots,j_{t-1}(\boldsymbol{X}))}
	(\boldsymbol{X}).
\end{align}
Since the residual statistics involved are continuously distributed under the assumed Gaussian model, ties occur with probability zero and may therefore be ignored throughout the theoretical development. An arbitrary measurable tie-breaking convention may be adopted on the null-probability tie set without affecting admissibility.

Given constants
\[
C^{G}_1\geqslant C^{G}_2\geqslant \cdots \geqslant C^{G}_n>0,
\]
the corresponding monotone residual-based step-down procedure operates as follows.
At stage $t$, reject the null hypothesis
$H_{0j_t(\boldsymbol{X})}$ if
\[
S_{tj_t(\boldsymbol{X})}^{
	(j_1(\boldsymbol{X}),\dots,j_{t-1}(\boldsymbol{X}))
}
(\boldsymbol{X})
>
C^{G}_t.
\]
Otherwise, stop and accept all remaining null hypotheses. The procedure
continues until an acceptance occurs or all null hypotheses have been rejected.

The MRD procedure is recovered as the special case corresponding to the identity transformations
\[
h_{tj}^{(i_1,\dots,i_{t-1})}(x)=x
\]
for all admissible choices of
$t,j,i_1,\dots,i_{t-1}$.

\begin{remark}
	Note that the indices $j_t(\boldsymbol{X})$ and $\widetilde{j}_{t}(\boldsymbol{X})$ defined in
	\eqref{GENERAL_INDEX} and \eqref{MRD_INDEX}, respectively, need not coincide
	in general. Thus, the generalized monotone residual-based procedure considered
	here is not simply a reparameterization of the MRD procedure, but may induce a
	substantially different stagewise ordering of the active hypotheses. Even globally common monotone residual transformations need not preserve the future adaptive residual systems, since differences in the stagewise threshold structure may recursively alter the adaptive conditioning histories. This makes
	the admissibility analysis considerably more delicate, since the proof cannot rely
	on an exact correspondence between the stagewise rejection sequences generated
	by the two procedures. 
\end{remark}

\begin{remark}
The class of procedures introduced above preserves the fundamental monotone relationship between the magnitudes of the residual statistics and the 
corresponding stagewise ordering of the active hypotheses. The main result of the next section shows that this local monotone structural property is sufficient to guarantee admissibility under the vector loss formulation considered in this paper.
\end{remark}

%

\section{Admissibility of Monotone Residual-based Step-down Procedures}

In this section, we establish the main admissibility result of the paper and provide a decision theoretic justification for the class of monotone
residual-based step-down procedures introduced in the previous section. The central question is whether admissibility persists under such highly
general locally adaptive residual transformations despite the complicated data-dependent geometry induced by the adaptive stagewise ordering
mechanism.

Recall that any multiple testing procedure
$\Phi(\boldsymbol{x})=(\phi_1(\boldsymbol{x}),\dots,\phi_n(\boldsymbol{x}))$
induces an individual test function $\phi_j(\boldsymbol{x})$ for testing
$H_{0j}$ against $H_{Aj}$, where $\phi_j(\boldsymbol{x})$ denotes the probability
of rejecting the $j$-th null hypothesis when the observation
$\boldsymbol{X}=\boldsymbol{x}$ is realized. We consider the standard $0-1$ loss function
corresponding to $\phi_j$, given by
\begin{equation}\label{INDIVIDUAL_LOSS}
	L_j\big(\phi_j(\boldsymbol{X}),\boldsymbol{\theta}\big)
	=
	I\{\theta_j=0\}\phi_j(\boldsymbol{X})
	+
	I\{\theta_j\neq0\}
	\big(1-\phi_j(\boldsymbol{X})\big),
\end{equation}
while the corresponding risk function is given by
\begin{equation}\label{INDIVIDUAL_RISK}
	R_j\big(\phi_j,\boldsymbol{\theta}\big)
	=
	I\{\theta_j=0\}
	E_{\boldsymbol{\theta}:\theta_j=0}\big(\phi_j(\boldsymbol{X})\big)
	+
	I\{\theta_j\neq0\}
	E_{\boldsymbol{\theta}:\theta_j\neq0}
	\big(1-\phi_j(\boldsymbol{X})\big).
	\nonumber
\end{equation}

We consider the overall loss function for the multiple testing procedure
$\Phi(\boldsymbol{X})$ to be the vector loss
\begin{equation}\label{VECTOR_LOSS}
	L\big(\Phi(\boldsymbol{X}),\boldsymbol{\theta}\big)
	=
	\big(
	L_1\big(\phi_1(\boldsymbol{X}),\boldsymbol{\theta}\big),
	\dots,
	L_n\big(\phi_n(\boldsymbol{X}),\boldsymbol{\theta}\big)
	\big),
\end{equation}
with corresponding vector risk function
\begin{equation}\label{VECTOR_RISK}
	R\big(\Phi,\boldsymbol{\theta}\big)
	=
	\big(
	R_1\big(\phi_1,\boldsymbol{\theta}\big),
	\dots,
	R_n\big(\phi_n,\boldsymbol{\theta}\big)
	\big).
	\nonumber
\end{equation}

A multiple testing procedure $\Phi(\boldsymbol{X})$ is said to be inadmissible
with respect to the vector loss function (\ref{VECTOR_LOSS}) if there exists
another multiple testing procedure $\Phi^{*}(\boldsymbol{X})$ such that
$R_j\big(\phi^{*}_j,\boldsymbol{\theta}\big)
\leq
R_j\big(\phi_j,\boldsymbol{\theta}\big)$
for all $j=1,\dots,n$ and all $\boldsymbol{\theta}\in\mathbb{R}^n$,
with strict inequality holding for at least one $j$ and some
$\boldsymbol{\theta}\in\mathbb{R}^n$. A multiple testing procedure is said to be
admissible if it is not inadmissible in the aforesaid sense. It is natural
that a multiple testing procedure which is inadmissible with respect to the
vector loss function (\ref{VECTOR_LOSS}) also becomes inadmissible whenever
the loss is a non-decreasing function of the numbers of type I and type II
errors. In this context, it is worth recalling that
\citet{COHEN_KOL_SACK_2007},
\citet{COHEN_SACK_2005_B},
\citet{COHEN_SACK_2007} and
\citet{COHEN_SACK_2008}
showed that in many common applications involving dependent test statistics,
typical $p$-value based stepwise testing procedures, including the celebrated
BH method, are inadmissible with respect to the vector loss function
(\ref{VECTOR_LOSS}). Consequently, such procedures also become inadmissible
whenever the risk is a non-decreasing function of the expected numbers of
type I and type II errors. This reveals an undesirable feature of many
traditional stepwise multiple testing procedures under dependence.

We now establish that the class of monotone residual-based step-down
procedures introduced in the previous section does not suffer from this
deficiency. It is easy to see using standard contrapositive arguments that
a multiple testing procedure is admissible with respect to the vector loss
function (\ref{VECTOR_LOSS}) if and only if each induced individual testing
procedure is admissible with respect to the standard $0-1$ loss
(\ref{INDIVIDUAL_LOSS}). Therefore, in order to establish admissibility of
the proposed methodology, it suffices to show that the induced test for
testing $H_{01}$ against $H_{A1}$ is admissible under the assumed setup.
As in \citet{COHEN_SACK_XU_2009}, our proof relies on a classical result
due to \citet{MAT_TRU_1967}, which provides a necessary and sufficient
condition for admissibility of a test of $H_{01}$ versus $H_{A1}$ when
the joint distribution of $\boldsymbol{X}$ belongs to an exponential family.

We emphasize that admissibility of the generalized monotone residual-based
step-down procedures considered here does not follow as an immediate
consequence of the admissibility of the MRD procedure established in
\citet{COHEN_SACK_XU_2009}. Indeed, for each stage $t$, the statistics
$S_{tj}^{(j_1(\boldsymbol{x}),\dots,j_{t-1}(\boldsymbol{x}))}(\boldsymbol{x})$ depend on the adaptive stagewise
index sequence $j_1(\boldsymbol{x}),\dots,j_{t-1}(\boldsymbol{x})$, each of which itself
depends on the observed data vector $\boldsymbol{x}$. Consequently, the quantities
$\sigma_{j\cdot(j_1(\boldsymbol{x}),\dots,j_{t-1}(\boldsymbol{x}))}$ and the associated
acceptance regions possess highly nontrivial data-dependent geometry.
Moreover, as noted earlier, the stagewise rejection indices generated by
the generalized monotone residual-based procedure need not coincide with
those of the MRD procedure. Even under globally common monotone residual transformations, differences in the stagewise threshold structure may recursively alter the adaptive conditioning sets, thereby inducing substantially different future residual systems and rejection paths. This makes the admissibility analysis
considerably more delicate, since the proof cannot rely on an exact
correspondence between the stagewise rejection sequences generated by the
two procedures. Instead, a more careful structural analysis of the induced
acceptance regions becomes necessary. In the process, we establish a broader structural admissibility result
showing that, for the multiple testing problem
(\ref{TESTING_PROBLEM}), any step-down multiple testing procedure based
on statistics $S_{tj}$, where each $S_{tj}$ is obtained through a
locally adaptive strictly increasing transformation of the absolute value
of the corresponding MRD statistic $U_{tj}$, is admissible with respect
to the vector loss function (\ref{VECTOR_LOSS}). To the best of our knowledge, this general structural
result appears to be new in the literature and substantially extends part
of the admissibility theory developed in
\citet{COHEN_SACK_XU_2009}.

Let $\phi_j(\boldsymbol{x})$ denote the test function induced by the generalized
monotone residual-based step-down procedure for testing
$H_{0j}$ versus $H_{Aj}$ when the observation $\boldsymbol{X}=\boldsymbol{x}$ is realized.
The following lemma due to \citet{MAT_TRU_1967} provides a necessary and
sufficient condition for admissibility of a testing procedure for testing
$H_{01}$ versus $H_{A1}$ when $\boldsymbol{\Sigma}$ is known.

Let $$\boldsymbol{Y} = \boldsymbol{\Sigma}^{-1}\boldsymbol{X}.$$
\begin{lem}\label{LEM_MATH_TRUAX}
	A necessary and sufficient condition for a test $\phi(\boldsymbol{y})$ of $H_{01}$ versus $H_{A1}$ to be admissible is that, for almost every fixed $y_2, \dots,y_n$, the acceptance region of the test is an interval in $y_1$.
\end{lem}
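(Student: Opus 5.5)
Since this lemma is a classical result, I would prove it by reducing it to a one-parameter, two-sided testing problem in an exponential family. The joint density of $\boldsymbol{X}$ is proportional to $\exp\{\boldsymbol{\theta}^T\boldsymbol{\Sigma}^{-1}\boldsymbol{x}-\tfrac12\boldsymbol{\theta}^T\boldsymbol{\Sigma}^{-1}\boldsymbol{\theta}\}$ times a function of $\boldsymbol{x}$. Hence $\boldsymbol{Y}=\boldsymbol{\Sigma}^{-1}\boldsymbol{X}$ is the natural sufficient statistic and $\boldsymbol{\theta}$ is the natural parameter. This means that $\theta_1$ is the parameter of interest and $(\theta_2,\dots,\theta_n)$ is a free nuisance parameter ranging over all of $\mathbb{R}^{n-1}$.

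The key structural fact is conditional. Given $(y_2,\dots,y_n)$, the conditional law of $Y_1$ is a one-parameter exponential family in $\theta_1$ whose density is proportional to $e^{\theta_1 y_1}$ times a fixed Gaussian kernel, and it does not depend on the nuisance parameters. Moreover, for each fixed $\theta_1$, the statistic $(Y_2,\dots,Y_n)$ is complete sufficient for $(\theta_2,\dots,\theta_n)$. I would therefore first express every risk as an average over $(Y_2,\dots,Y_n)$ of conditional risks in $y_1$.

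For \emph{sufficiency} (interval sections imply admissibility), suppose $\phi^*$ dominates a test $\phi$ whose $y_1$-sections of the acceptance region are intervals. I would argue in two steps.
\begin{enumerate}
\item Domination at $\theta_1=0$ and at $\theta_1\neq 0$, together with continuity and differentiability of the power in $\theta_1$, should force equality of size and of the $\theta_1$-derivative of power at $\theta_1=0$, for every nuisance value. Completeness of $(Y_2,\dots,Y_n)$ then turns these into the conditional identities $E_0[\phi^*-\phi\mid y_{-1}]=0$ and $E_0[Y_1(\phi^*-\phi)\mid y_{-1}]=0$ for almost every $y_{-1}$.
\item Given these conditional constraints, the generalized Neyman--Pearson lemma for the one-parameter family says that a test accepting on an interval is the unique test, up to null sets, maximizing conditional power at every $\theta_1\neq0$. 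Since $\phi^*$ has at least the power of $\phi$ for every $\theta_1\neq0$, it must coincide with $\phi$ almost everywhere, which contradicts strict domination.
\end{enumerate}

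For \emph{necessity}, suppose that on a set of $y_{-1}$ of positive measure the acceptance section is not an interval. On that set I would replace the section by the interval test with the same conditional size and the same conditional derivative constraint. By the same generalized Neyman--Pearson argument, this replacement has conditional power at least as large for every $\theta_1\neq0$, and strictly larger for some, while the conditional size is unchanged. Integrating over $y_{-1}$ then produces a test that dominates $\phi$.

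The main obstacle is the first step of the sufficiency argument. There, domination gives only inequalities in $\theta_1$, with the nuisance parameter unrestricted, whereas completeness needs exact identities. To convert these inequalities into identities I would use a Stein-type limiting argument: let $\theta_1\to0^{\pm}$ and compare one-sided derivatives, possibly sending the nuisance parameters to infinity along suitable directions so as to localize on particular $y_{-1}$. If this becomes too delicate, I would instead invoke the essential-completeness theorem of \citet{MAT_TRU_1967} directly. That theorem states that tests with convex acceptance sections form a minimal complete class, and admissibility of each of its members then follows by the same conditional uniqueness argument.
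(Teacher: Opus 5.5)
Your proposal is correct, but it takes a different route from the paper. The paper gives no argument and simply cites \citet{MAT_TRU_1967}, which is your stated fallback. You instead reconstruct the result for this Gaussian model:
$\boldsymbol{Y}=\boldsymbol{\Sigma}^{-1}\boldsymbol{X}$ is the natural statistic, and the conditional law of $Y_1$ given $y_{-1}=(y_2,\dots,y_n)$ is a one-parameter exponential family free of the nuisance parameters. Completeness of $(Y_2,\dots,Y_n)$ turns unconditional identities into conditional ones. The generalized Neyman--Pearson lemma, with constraints on $E_0[\psi\mid y_{-1}]$ and $E_0[Y_1\psi\mid y_{-1}]$, then singles out interval-acceptance tests as the unique a.e.\ maximizers of conditional power.

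The citation buys brevity and generality (arbitrary multiparameter exponential families, randomized tests). Your version is self-contained and makes explicit the structure the later geometric lemmas rely on.

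The ``main obstacle'' you flag is not one; your step 1 already settles it. Fix $\nu=(\theta_2,\dots,\theta_n)$ and let $D(\theta_1,\nu)=E_{\boldsymbol{\theta}}(\phi^*-\phi)$. This is analytic in $\theta_1$, with $D(0,\nu)\le 0\le D(\theta_1,\nu)$ for $\theta_1\neq 0$. Hence $D(0,\nu)=0$, and $\theta_1=0$ is an interior minimum, so $\partial_{\theta_1}D(0,\nu)=E_{(0,\nu)}[Y_1(\phi^*-\phi)]=0$. Since both identities hold for every $\nu$, completeness applies at once. No Stein-type limiting or localization argument is needed.

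Two points need a line of care.

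First, suppose an acceptance section is a half-line. Then the generalized Neyman--Pearson form fails for $\theta_1$ of the wrong sign: the set where $e^{\theta_1 y_1}$ exceeds a line always contains a neighbourhood of the end toward which $e^{\theta_1 y_1}$ grows. Uniqueness there comes instead from the fact that a one-sided test is the unique extremizer of $E_0[Y_1\psi\mid y_{-1}]$ at its size. So the conditional constraint set is a singleton and forces $\phi^*=\phi$ a.e.\ on that section. Empty or full sections are handled the same way.

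Second, in the necessity part you should show that a non-interval section's constraint pair is matched by a bounded two-sided test $\{y_1<a\}\cup\{y_1>b\}$. Such a section has conditional size in $(0,1)$ and first moment strictly between the one-sided extremes. An intermediate-value argument along the fixed-size two-sided family then gives finite $a<b$, which should be chosen measurably in $y_{-1}$.
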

\begin{proof}
	See \citet{MAT_TRU_1967}. 
\end{proof}


Lemma \ref{LEM_MATH_TRUAX} reduces the admissibility problem to a geometric 
analysis of the induced acceptance region along suitable one-dimensional 
perturbation directions. The remainder of the argument therefore focuses on 
understanding how the adaptive residual-based step-down mechanism evolves 
along such perturbation paths.

Note that, for each fixed $(y_2,\dots,y_n)$, to study the test function $\phi(\boldsymbol{y})=\phi_1(\boldsymbol{x})$ as $y_1$ varies, it would be enough to consider sample points of the form $\boldsymbol{x}+r\boldsymbol{g}$, where $\boldsymbol{g}$ is the first column of $\boldsymbol{\Sigma}$ and $r$ varies. This is true, since $\boldsymbol{y}$ is a function of $\boldsymbol{x}$, and so $\boldsymbol{y}$ evaluated at $\boldsymbol{x}+r\boldsymbol{g}$ is
\begin{eqnarray}
	\boldsymbol{\Sigma}^{-1}(\boldsymbol{x}+r\boldsymbol{g})
	&=&
	\boldsymbol{y}+(r,0,\dots,0)
	=
	(y_1+r,y_2,\dots,y_n).
	\nonumber
\end{eqnarray}
Thus, varying $r$ changes only the first coordinate of $\boldsymbol{y}$ while keeping the remaining coordinates fixed. Lemma \ref{LEM_MATH_TRUAX} therefore reduces the admissibility problem to showing that, along each such line, the acceptance region of the induced test for $H_{01}$ is an interval.

\begin{lem}\label{LEM_SHIFT_PROPERTY}
	The functions $U_{tj}$ as given in (\ref{MRD_STATISTICS}) have the following properties.
	
	For $t \in \{1,\dots,n\}$ and for $j_1,\dots,j_{t-1} \in \{2,\dots,n\}$ with $j_i \neq j_{k} $ for $i \neq k$,
	\begin{eqnarray}
		U^{(j_1,\dots,j_{t-1})}_{t1}(\boldsymbol{x} + r\boldsymbol{g})
		&=&
		U^{(j_1,\dots,j_{t-1})}_{t1}(\boldsymbol{x})
		+
		r\sigma^{\frac{1}{2}}_{1\cdot( j_1,\dots,j_{t-1})}.
		\nonumber
	\end{eqnarray}
	Moreover, for $t \in \{1,\dots,n\}$ and for $j \in \{2,\dots,n\} \setminus \{j_1,\dots,j_{t-1}\},$
	\begin{eqnarray}
		U^{(j_1,\dots,j_{t-1})}_{tj}(\boldsymbol{x} + r\boldsymbol{g})
		&=&
		U^{(j_1,\dots,j_{t-1})}_{tj}(\boldsymbol{x}).
		\nonumber
	\end{eqnarray}
\end{lem}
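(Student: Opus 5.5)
The plan is to compute directly how the numerator of $U_{tj}^{(j_1,\dots,j_{t-1})}$ changes when $\boldsymbol{x}$ is replaced by $\boldsymbol{x}+r\boldsymbol{g}$. The denominators $\sigma^{1/2}_{j\cdot(j_1,\dots,j_{t-1})}$ depend only on $\boldsymbol{\Sigma}$, and the index set $\{j_1,\dots,j_{t-1}\}$ is held fixed in the statement. So the whole question is the linear map
\[
\boldsymbol{x}\mapsto x_j-{\boldsymbol{\sigma}_{(j)}^{(j_1,\dots,j_{t-1})}}^{T}\boldsymbol{\Sigma}^{-1}_{(j_1,\dots,j_{t-1},j)}\boldsymbol{x}^{(j_1,\dots,j_{t-1},j)}
\]
evaluated at $\boldsymbol{g}$. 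Since the numerator is linear in $\boldsymbol{x}$, it suffices to compute it at $\boldsymbol{x}=\boldsymbol{g}$, i.e.\ for the first column $\boldsymbol{g}=(\sigma_{11},\sigma_{21},\dots,\sigma_{n1})^T$ of $\boldsymbol{\Sigma}$, and multiply by $r$.

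Write $A=\{1,\dots,n\}\setminus\{j_1,\dots,j_{t-1}\}$ for the active set, which contains $1$ because the $j_i$ lie in $\{2,\dots,n\}$. For an active $j$, let $B=A\setminus\{j\}$.

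\textbf{Case $j\neq 1$.} Here $1\in B$. The vector $\boldsymbol{g}^{(j_1,\dots,j_{t-1},j)}$ is the column of $\boldsymbol{\Sigma}_{BB}$ indexed by $1$, i.e.\ $\boldsymbol{\Sigma}_{BB}\boldsymbol{e}_1$, where $\boldsymbol{e}_1$ is the unit vector at the position of index $1$ within $B$. Hence
\[
\boldsymbol{\Sigma}^{-1}_{BB}\boldsymbol{g}^{(j_1,\dots,j_{t-1},j)}=\boldsymbol{e}_1 .
\]
The subtracted term is then ${\boldsymbol{\sigma}_{(j)}^{(j_1,\dots,j_{t-1})}}^{T}\boldsymbol{e}_1=\sigma_{j1}$, which cancels exactly with $g_j=\sigma_{j1}$. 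The numerator change is therefore zero, which gives the second identity.

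\textbf{Case $j=1$.} Now $B=A\setminus\{1\}$. Here $\boldsymbol{g}^{(j_1,\dots,j_{t-1},1)}$ consists of the entries $\sigma_{k1}$ for $k\in B$, which is precisely $\boldsymbol{\sigma}_{(1)}^{(j_1,\dots,j_{t-1})}$. The numerator evaluated at $\boldsymbol{g}$ is therefore
\[
\sigma_{11}-{\boldsymbol{\sigma}_{(1)}^{(j_1,\dots,j_{t-1})}}^{T}\boldsymbol{\Sigma}^{-1}_{(j_1,\dots,j_{t-1},1)}\boldsymbol{\sigma}_{(1)}^{(j_1,\dots,j_{t-1})}=\sigma_{1\cdot(j_1,\dots,j_{t-1})}.
\]
Dividing by $\sigma^{1/2}_{1\cdot(j_1,\dots,j_{t-1})}$ yields the shift $r\sigma^{1/2}_{1\cdot(j_1,\dots,j_{t-1})}$, which is the first identity.

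The edge cases are $t=1$ (no eliminated indices) and $B=\emptyset$ (the last active coordinate). In both, the same formulas hold, with the convention that an empty regression term is zero. When $B=\emptyset$ the only possible $j$ is $1$, and the shift is $r\sigma_{11}^{1/2}$, consistent with $\sigma_{1\cdot(\cdot)}=\sigma_{11}$.

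The only step needing care is the bookkeeping identity $\boldsymbol{g}^{(\cdot)}=\boldsymbol{\Sigma}_{BB}\boldsymbol{e}_1$. This rests on the symmetry of $\boldsymbol{\Sigma}$: the column of $\boldsymbol{\Sigma}_{BB}$ indexed by $1$ has entries $\sigma_{k1}$ for $k\in B$. Beyond that, the proof is a routine verification.
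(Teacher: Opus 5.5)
Your proof is correct. The paper gives no derivation of its own here; it only points to Lemma 3.2 of \citet{COHEN_SACK_XU_2009}, and notes that the version stated there drops the factor $\sigma^{1/2}_{1\cdot(j_1,\dots,j_{t-1})}$. Your route is therefore different in that it is a self-contained computation, and it is exactly what is needed to justify the corrected form the paper relies on later. It uses three ingredients:
\begin{itemize}
\item linearity of the numerator in $\boldsymbol{x}$;
\item the identity $\boldsymbol{\Sigma}^{-1}_{(j_1,\dots,j_{t-1},j)}\boldsymbol{g}^{(j_1,\dots,j_{t-1},j)}=\boldsymbol{e}_1$ whenever $1$ is still among the regressors, which requires $j_1,\dots,j_{t-1}\in\{2,\dots,n\}$ and is what you assume;
\item the Schur-complement identity when $j=1$.
\end{itemize}
What the citation buys is brevity. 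What your argument buys is an explicit check of the scaling factor that the cited source gets wrong.

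One small bookkeeping correction: symmetry of $\boldsymbol{\Sigma}$ is needed in a different step from the one you name.
\begin{itemize}
\item $\boldsymbol{g}^{(\cdot)}=\boldsymbol{\Sigma}_{BB}\boldsymbol{e}_1$ holds directly from the definition of the submatrix, with no symmetry needed.
\item Symmetry enters in the cancellation for $j\neq 1$. By the paper's definition, the entry of $\boldsymbol{\sigma}_{(j)}^{(j_1,\dots,j_{t-1})}$ at index $1$ is $\sigma_{1j}$, and this must equal $g_j=\sigma_{j1}$.
\end{itemize}
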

\begin{proof}
	See the proof of Lemma 3.2 of \citet{COHEN_SACK_XU_2009}. 
\end{proof}

Lemma \ref{LEM_SHIFT_PROPERTY} shows that, along the direction $\boldsymbol{g}$, the residual statistic corresponding to the first coordinate changes linearly in $r$, whereas the residual statistics corresponding to the remaining active coordinates remain invariant. This separation property is the basic geometric mechanism behind the admissibility argument.

\begin{remark}\label{REMARK_MONOTONE_STRUCTURE}
	Since $\sigma_{1\cdot(j_1,\dots,j_{t-1})}>0$, it follows from Lemma \ref{LEM_SHIFT_PROPERTY} that for each fixed $\boldsymbol{x}\in\mathbb{R}^n$ and given any $(t-1)$ many indices $(j_1,\dots,j_{t-1})$,
	$U^{(j_1,\dots,j_{t-1})}_{t1}(\boldsymbol{x}+r\boldsymbol{g})$
	is strictly increasing in $r$. Consequently,
	$|U^{(j_1,\dots,j_{t-1})}_{t1}(\boldsymbol{x}+r\boldsymbol{g})|$
	initially decreases and then increases as $r$ increases. Therefore, when $|U^{(j_1,\dots,j_{t-1})}_{t1}(\boldsymbol{x}+r\boldsymbol{g})|$
	decreases in $r$, $U^{(j_1,\dots,j_{t-1})}_{t1}(\boldsymbol{x}+r\boldsymbol{g})$
	is negative, while when
	$|U^{(j_1,\dots,j_{t-1})}_{t1}(\boldsymbol{x}+r\boldsymbol{g})|$
	increases in $r$,
	$U^{(j_1,\dots,j_{t-1})}_{t1}(\boldsymbol{x}+r\boldsymbol{g})$
	is positive. This observation is crucial for deriving the interval structure of the induced acceptance region.
\end{remark}

\begin{cor}\label{COR_RESIDUAL_INVARIANCE}
	For any $r \in \mathbb{R}$, we have
	\begin{eqnarray}
		U_{1j}(\boldsymbol{x} + r\boldsymbol{g})
		=
		U_{1j}(\boldsymbol{x})
		\mbox{ for all } j \in \{2,\dots,n\},
		\nonumber
	\end{eqnarray}
	which, in turn, implies the following:
	\begin{eqnarray}
		S_{1j}(\boldsymbol{x} + r\boldsymbol{g})
		=
		S_{1j}(\boldsymbol{x})
		\mbox{ for all } j \in \{2,\dots,n\}.
		\nonumber
	\end{eqnarray}
\end{cor}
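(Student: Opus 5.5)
This corollary is the $t=1$ case of Lemma \ref{LEM_SHIFT_PROPERTY}, followed by applying the fixed transformation $h_{1j}$. At stage $1$ no indices have been eliminated, so the conditioning history is empty. The second assertion of Lemma \ref{LEM_SHIFT_PROPERTY} with $t=1$ then reads
\[
U_{1j}(\boldsymbol{x}+r\boldsymbol{g}) = U_{1j}(\boldsymbol{x}) \quad \text{for every } j\in\{2,\dots,n\} \text{ and every } r\in\mathbb{R},
\]
and the condition $j\notin\{j_1,\dots,j_{t-1}\}$ holds vacuously. If one prefers a self-contained check, write $U_{1j}(\boldsymbol{x})$ as $X_j$ minus its regression on $\boldsymbol{X}^{(j)}$, scaled by $\sigma_{j\cdot}^{-1/2}$. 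This is a linear functional $\boldsymbol{a}_j^T\boldsymbol{x}$ with $\boldsymbol{a}_j$ proportional to the $j$-th column of $\boldsymbol{\Sigma}^{-1}$, since the residual of $X_j$ given all other coordinates is proportional to $(\boldsymbol{\Sigma}^{-1}\boldsymbol{X})_j$. Hence $\boldsymbol{a}_j^T\boldsymbol{g}\propto(\boldsymbol{\Sigma}^{-1}\boldsymbol{g})_j=e_{1j}=0$ for $j\neq1$, which gives the invariance directly.

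For the second assertion, recall from \eqref{GENERAL_STATISTIC_DEFN} that $S_{1j}(\boldsymbol{x})=h_{1j}(|U_{1j}(\boldsymbol{x})|)$. Here $h_{1j}$ is the stage-one transformation for coordinate $j$ with empty history. The key point is that $h_{1j}$ is a fixed function, determined only by the stage, the coordinate and the (empty) history, and not by $\boldsymbol{x}$. Therefore equality of the arguments $|U_{1j}(\boldsymbol{x}+r\boldsymbol{g})|=|U_{1j}(\boldsymbol{x})|$ immediately gives $S_{1j}(\boldsymbol{x}+r\boldsymbol{g})=S_{1j}(\boldsymbol{x})$. Strict monotonicity of $h_{1j}$ is not even needed for this step.

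There is no real obstacle in this argument. The only point needing care is that the transformation $h$ depends on the history but not on the data point. At stage $1$ the history is the same (empty) for $\boldsymbol{x}$ and $\boldsymbol{x}+r\boldsymbol{g}$, so no issue of a changing conditioning set arises. That issue appears only at later stages, which is where the subsequent invariance arguments of the paper will be needed.
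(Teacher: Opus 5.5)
Your proposal is correct and matches the paper, which presents this corollary as the immediate $t=1$ (empty-history) case of Lemma~\ref{LEM_SHIFT_PROPERTY}, followed by applying the fixed stage-one transformation $h_{1j}$ to $|U_{1j}|$. Your optional self-contained check, via $U_{1j}(\boldsymbol{x})=\sigma_{j\cdot}^{1/2}(\boldsymbol{\Sigma}^{-1}\boldsymbol{x})_j$ and $\boldsymbol{\Sigma}^{-1}\boldsymbol{g}=(1,0,\dots,0)^T$, is also valid and simply re-derives that case of the lemma.
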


In other words, Corollary~\ref{COR_RESIDUAL_INVARIANCE} shows that perturbations along the 
admissibility direction $g$ leave all residual scores corresponding to the 
competing coordinates unchanged, while only the residual statistic 
	associated with the first coordinate evolves along the $g$-direction. 
	Consequently, the induced geometry along the admissibility direction is 
	governed by a single evolving residual component against a collection of 
	geometrically invariant competing residual scores.

In Lemma 3.2 of \citet{COHEN_SACK_XU_2009}, the term $\sigma^{1/2}_{1\cdot(j_1,\dots,j_{t-1})}$ was omitted, most likely due to a typographical oversight, and its role was not explicitly analyzed in the subsequent theoretical argument. In the present setting, this term requires careful attention since $\sigma_{1\cdot(j_1(\boldsymbol{x}),\dots,j_{t-1}(\boldsymbol{x}))}$ depends on the adaptive stagewise indices $j_1(\boldsymbol{x}),\dots,j_{t-1}(\boldsymbol{x})$, each of which is itself a function of the observed data vector $\boldsymbol{x}$. A major difficulty here is that both the residual statistics and the associated
conditional variance structure evolve adaptively with the previously rejected
indices $j_1(\boldsymbol{x}),\dots,j_{t-1}(\boldsymbol{x})$. Consequently, the local geometry of the
step-down procedure is itself data-dependent and potentially unstable under perturbations. It is therefore necessary to understand how these adaptive
indices behave as $\boldsymbol{x}$ varies along the admissibility direction $\boldsymbol g$. The following lemma shows that, remarkably, perturbations along the special direction $\boldsymbol{g}$ preserve the common portion of the stage-wise elimination sequence prior to rejection of $H_{01}$. This local invariance principle partially freezes the adaptive combinatorial structure of the procedure and initiates the reduction of the analysis to a one-dimensional monotone evolution along the $g$-trajectory.


Suppose $\phi_1 (\boldsymbol{x}^{*})=0$ when $\boldsymbol{x}^{*}$ is observed, that is, $\boldsymbol{x}^{*}$ is an acceptance point of $H_{01}$. Then the testing procedure must stop before $H_{01}$ gets rejected. Suppose the testing procedure stops at some stage $t$ without rejecting $H_{01}$. Let $\boldsymbol{x}^{*}+r_{0}\boldsymbol{g}$ be a point of rejection of $H_{01}$, that is,
$\phi_1(\boldsymbol{x}^{*}+r_{0}\boldsymbol{g})=1$, with $r_{0}\neq 0$. Let the testing procedure reject $H_{01}$ at some stage $t_0$ when $\boldsymbol{x}^{*}+r_{0}\boldsymbol{g}$ is observed. The next lemma gives an important identity between the set of indices $j_l(\boldsymbol{x}^{*}+r_{0}\boldsymbol{g})$ and $j_l(\boldsymbol{x}^{*})$, for $1\leqslant l\leqslant \min\{t_0,t\}-1$, which shows that these indices remain invariant whenever $\min\{t,t_0\}>1$.

\begin{lem}\label{LEM_INDEX_INVARIANCE_PARTIAL}
	Under the conditions $\phi_1 (\boldsymbol{x}^{*}) =0$ and $\phi_1 (\boldsymbol{x}^{*}+r_{0}\boldsymbol{g}) = 1$, with $r_{0}\neq 0$, the following holds when $t >1$ and $t_0 >1$:
	\begin{equation}
		j_l(\boldsymbol{x}^{*}+r_{0}\boldsymbol{g})= j_l(\boldsymbol{x}^{*}) \mbox{ for all } l=1,\dots,\min\{t_0,t\} -1, \nonumber
	\end{equation}
where $t_0$ and $t$ are defined as before. 	
\end{lem}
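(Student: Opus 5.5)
Set $m=\min\{t_0,t\}$ and argue by induction on $l=1,\dots,m-1$. The induction hypothesis is that $j_k(\boldsymbol{x}^{*}+r_0\boldsymbol{g})=j_k(\boldsymbol{x}^{*})$ for all $k<l$. The point is that at every such stage neither trajectory has yet selected coordinate $1$, so only the invariant residuals of Lemma \ref{LEM_SHIFT_PROPERTY} come into play.

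First note that at $\boldsymbol{x}^{*}$ coordinate $1$ is never selected, since $H_{01}$ is accepted. The procedure rejects $H_{0j_1(\boldsymbol{x}^*)},\dots,H_{0j_{t-1}(\boldsymbol{x}^*)}$ and stops at stage $t$, so $j_k(\boldsymbol{x}^*)\neq1$ for $k\le t-1$. At $\boldsymbol{x}^*+r_0\boldsymbol{g}$, $H_{01}$ is rejected at stage $t_0$, which means $j_{t_0}(\boldsymbol{x}^*+r_0\boldsymbol{g})=1$ and $j_k(\boldsymbol{x}^*+r_0\boldsymbol{g})\neq1$ for $k<t_0$. Thus for every $l\le m-1$ both selected indices $j_l(\boldsymbol{x}^*)$ and $j_l(\boldsymbol{x}^*+r_0\boldsymbol{g})$ lie in $\{2,\dots,n\}$.

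For the induction step, fix $l\le m-1$ and write $(i_1,\dots,i_{l-1})$ for the common history, which by hypothesis lies in $\{2,\dots,n\}$. By the second part of Lemma \ref{LEM_SHIFT_PROPERTY}, $U^{(i_1,\dots,i_{l-1})}_{lj}(\boldsymbol{x}^*+r_0\boldsymbol{g})=U^{(i_1,\dots,i_{l-1})}_{lj}(\boldsymbol{x}^*)$ for every active $j\neq1$. Since the transformation $h^{(i_1,\dots,i_{l-1})}_{lj}$ depends only on $(l,j)$ and the history, which coincide, the same holds for $S_{lj}$. Let $a=j_l(\boldsymbol{x}^*)$ and $b=j_l(\boldsymbol{x}^*+r_0\boldsymbol{g})$, both $\neq1$. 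Then $a$ maximizes $S_{lj}(\boldsymbol{x}^*)$ over all active $j$, including $j=1$, and in particular over active $j\neq1$. Likewise $b$ maximizes $S_{lj}(\boldsymbol{x}^*+r_0\boldsymbol{g})=S_{lj}(\boldsymbol{x}^*)$ over active $j\neq1$. Both are therefore maximizers of the same function over the same set of non-$1$ active indices, and since ties are excluded almost surely (or are broken by a fixed rule), $a=b$. The base case $l=1$ is the same argument using Corollary \ref{COR_RESIDUAL_INVARIANCE}.

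The crucial step, and the one I expect to need the most care, is the observation that the maximizer's identity is never affected by the only non-invariant score $S_{l1}$. This follows because at stage $l\le m-1$ neither path selects $1$, so the comparison never involves the shifting score, even though the value of $S_{l1}$ changes between the two points. Some care is also needed regarding ties. If the maximizer over non-$1$ indices is unique, the claim is immediate. If ties occur on a null set, we either restrict to $\boldsymbol{x}^*$ outside that set or assume a tie-breaking convention that depends only on the tied indices and their (invariant) scores, so that the same convention gives the same index at both points. Finally, we do not need stagewise thresholds at all, since the lemma concerns only indices at stages where both procedures continue.
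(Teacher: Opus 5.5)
Your proposal is correct and follows essentially the same route as the paper. You induct on $l$, observe that neither trajectory selects coordinate $1$ before stage $\min\{t_0,t\}$, and conclude that both selected indices maximize the same $\boldsymbol{g}$-invariant scores $S_{lj}$, $j\neq 1$, over the common active set. Your explicit handling of the history-dependence of $h_{lj}$ and of ties is slightly more careful than the paper's, but it does not change the argument.
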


\begin{proof}
	See Appendix.
\end{proof}

Lemma~\ref{LEM_INDEX_INVARIANCE_PARTIAL} shows that, along the admissibility direction $\boldsymbol g$, 
the adaptive stage-wise elimination sequence among the competing coordinates 
remains invariant up to stage $\min\{t,t_0\}-1$. In other words, perturbations 
along the $\boldsymbol g$-direction affect only the residual statistic associated 
with the first coordinate, while the relative ordering among the remaining active 
coordinates remains unchanged throughout the common portion of the adaptive 
rejection path. Consequently, the otherwise highly adaptive multistage testing 
procedure reduces locally to a partially frozen one-dimensional monotone evolution 
along the $\boldsymbol g$-trajectory. This local invariance property forms the key 
structural mechanism underlying the admissibility argument developed below.



Lemma \ref{LEM_INDEX_INVARIANCE_PARTIAL}, coupled with Corollary \ref{COR_RESIDUAL_INVARIANCE}, leads to the following important result on the relation between $t_0$ and $t$ defined before.

\begin{lem}\label{LEM_STOPPING_STAGE}
	Under the conditions $\phi_1 (\boldsymbol{x}^{*}) =0$ and $\phi_1 (\boldsymbol{x}^{*}+r_{0}\boldsymbol{g}) = 1$, with $r_{0}\neq 0$, the monotone residual-based step-down procedure must reject $H_{01}$ within $t$ steps when $\boldsymbol{x}^{*}+r_{0}\boldsymbol{g}$ is observed, that is, $1\leqslant t_0 \leqslant t,$ where $t_0$ and $t$ are defined as before. 
\end{lem}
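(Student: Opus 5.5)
We argue by contradiction: suppose $t_0>t$. Write $\boldsymbol{x}^{*}$ for the acceptance point, at which the procedure stops at stage $t$ without rejecting $H_{01}$, and $\boldsymbol{x}^{*}+r_0\boldsymbol{g}$ for the rejection point, at which $H_{01}$ is rejected at stage $t_0$. The idea is to show that the two runs of the procedure coincide through stage $t$, so the procedure cannot pass stage $t$ at $\boldsymbol{x}^{*}+r_0\boldsymbol{g}$.

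The case $t=1$ is immediate, since $t_0\geqslant 1=t$ would be violated only if $t_0<1$. So assume $t>1$; then $t_0>t>1$. By Lemma \ref{LEM_INDEX_INVARIANCE_PARTIAL}, $j_l(\boldsymbol{x}^{*}+r_0\boldsymbol{g})=j_l(\boldsymbol{x}^{*})$ for $l=1,\dots,\min\{t,t_0\}-1=t-1$. We then track the procedure at $\boldsymbol{x}^{*}+r_0\boldsymbol{g}$ through the first $t$ stages. Because $t_0>t$, none of the indices $j_1,\dots,j_t$ at the perturbed point equals $1$. Moreover, stages $1,\dots,t-1$ all result in rejections at both points, and their conditioning histories agree.

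Now consider stage $t$ and write $\boldsymbol{j}=(j_1(\boldsymbol{x}^{*}),\dots,j_{t-1}(\boldsymbol{x}^{*}))$ for the common history. At $\boldsymbol{x}^{*}$ the procedure stops at stage $t$, so
\[
\max_{j\notin\{j_1(\boldsymbol{x}^{*}),\dots,j_{t-1}(\boldsymbol{x}^{*})\}} S_{tj}^{\boldsymbol{j}}(\boldsymbol{x}^{*})\leqslant C^{G}_t .
\]
At $\boldsymbol{x}^{*}+r_0\boldsymbol{g}$ the procedure must reject at stage $t$ (since $t_0>t$), and the index it rejects, $j_t(\boldsymbol{x}^{*}+r_0\boldsymbol{g})=:k$, satisfies $k\neq 1$. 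By Lemma \ref{LEM_SHIFT_PROPERTY}, $U_{tk}^{\boldsymbol{j}}$ is invariant along $\boldsymbol{g}$ for $k\neq 1$; note this uses that all history indices lie in $\{2,\dots,n\}$, which holds because $1$ has not yet been rejected at either point. Hence, since the history is shared,
\[
S_{tk}^{\boldsymbol{j}}(\boldsymbol{x}^{*}+r_0\boldsymbol{g})=S_{tk}^{\boldsymbol{j}}(\boldsymbol{x}^{*})\leqslant C^{G}_t ,
\]
which contradicts rejection at stage $t$. Therefore $t_0\leqslant t$.

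The main obstacle is checking that the stage-$t$ statistics at the two points are computed with the same history and the same transformation $h_{tk}^{\boldsymbol{j}}$. Lemma \ref{LEM_INDEX_INVARIANCE_PARTIAL} gives agreement only up to index $t-1$, but that is exactly what stage $t$ requires, since its statistics depend only on $j_1,\dots,j_{t-1}$. We also need that no earlier stage at the perturbed point rejected index $1$, which follows from $t_0>t$. The degenerate case in which $\boldsymbol{x}^{*}$ stops because all hypotheses other than $H_{01}$ have been rejected cannot occur: the procedure does not stop while an active hypothesis remains without a failed test, so stopping at stage $t$ always means the stage-$t$ maximum is at most $C^{G}_t$. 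Ties are excluded, as agreed in the paper, so the argmax identification is well defined.
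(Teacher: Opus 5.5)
Your main case ($t>1$, assume $t_0>t$) is the paper's argument. The histories agree through stage $t-1$, the index $k=j_t(\boldsymbol{x}^{*}+r_0\boldsymbol{g})\neq 1$ has a $\boldsymbol{g}$-invariant score, and $S_{tk}(\boldsymbol{x}^{*}+r_0\boldsymbol{g})=S_{tk}(\boldsymbol{x}^{*})\leqslant C^G_t$ contradicts rejection at stage $t$. Your citation is actually cleaner than the paper's. The paper takes the agreement of $j_1,\dots,j_{t-1}$ from Corollary \ref{COR_INDEX_INVARIANCE}, which is itself deduced from the lemma being proved. You instead note that under $t_0>t$ one has $\min\{t,t_0\}-1=t-1$, so Lemma \ref{LEM_INDEX_INVARIANCE_PARTIAL} already supplies exactly what is needed, with no circularity.

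However, your dismissal of the case $t=1$ is a genuine error. You check that $t_0\geqslant 1=t$, but the claim is $t_0\leqslant t$. So for $t=1$ you must rule out $t_0>1$, which is not immediate; you appear to have conflated it with the trivial case $t_0=1$. Your main case then assumes $t>1$, and Lemma \ref{LEM_INDEX_INVARIANCE_PARTIAL} requires $t>1$ anyway, so the configuration $t=1$, $t_0>1$ is never refuted. This case has real content: a shift along $\boldsymbol{g}$ must not cause some other hypothesis to be rejected at stage $1$ when nothing was rejected at $\boldsymbol{x}^{*}$.

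The repair is short and is what the paper does. Suppose $t=1$ and $t_0>1$. Then $k=j_1(\boldsymbol{x}^{*}+r_0\boldsymbol{g})\neq 1$ is rejected at stage $1$, so $S_{1k}(\boldsymbol{x}^{*}+r_0\boldsymbol{g})>C^G_1$. But Corollary \ref{COR_RESIDUAL_INVARIANCE} gives $S_{1k}(\boldsymbol{x}^{*}+r_0\boldsymbol{g})=S_{1k}(\boldsymbol{x}^{*})\leqslant C^G_1$, since $\boldsymbol{x}^{*}$ stops at stage $1$. Equivalently, simply drop the restriction $t>1$: your stage-$t$ argument runs verbatim with an empty history, where no appeal to Lemma \ref{LEM_INDEX_INVARIANCE_PARTIAL} is needed.
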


\begin{proof}
	See Appendix.
\end{proof}

Combining Lemma~\ref{LEM_INDEX_INVARIANCE_PARTIAL} and Lemma~\ref{LEM_STOPPING_STAGE}, the following invariance principle becomes immediate. This invariance principle freezes the adaptive combinatorial structure of the procedure throughout all stages preceding rejection of $H_{01}$ and reduces the analysis locally to a one-dimensional monotone evolution along the $\boldsymbol g$-trajectory.
\begin{cor}\label{COR_INDEX_INVARIANCE}
Under the framework of Lemma~\ref{LEM_STOPPING_STAGE}, we have
\begin{equation}
j_l(\boldsymbol{x}^{*}+r_{0}\boldsymbol{g})= j_l(\boldsymbol{x}^{*}) \mbox{ for all } l=1,\dots,t_0-1. \nonumber
\end{equation}
\end{cor}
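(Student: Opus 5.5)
The corollary follows by combining Lemma~\ref{LEM_INDEX_INVARIANCE_PARTIAL} with Lemma~\ref{LEM_STOPPING_STAGE}. The only care needed is with the degenerate cases that Lemma~\ref{LEM_INDEX_INVARIANCE_PARTIAL} excludes, namely $t=1$ or $t_0=1$.

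First, I invoke Lemma~\ref{LEM_STOPPING_STAGE}, which gives $1\leqslant t_0\leqslant t$ under the standing hypotheses $\phi_1(\boldsymbol{x}^{*})=0$ and $\phi_1(\boldsymbol{x}^{*}+r_0\boldsymbol{g})=1$ with $r_0\neq 0$. Consequently $\min\{t_0,t\}=t_0$. The claimed range of indices $l=1,\dots,t_0-1$ therefore coincides exactly with the range $l=1,\dots,\min\{t_0,t\}-1$ appearing in Lemma~\ref{LEM_INDEX_INVARIANCE_PARTIAL}.

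Next, I split into cases.

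\emph{Case $t_0=1$.} The set $\{1,\dots,t_0-1\}$ is empty, so the statement holds vacuously.

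\emph{Case $t_0>1$.} By Lemma~\ref{LEM_STOPPING_STAGE}, $t\geqslant t_0>1$, so both hypotheses $t>1$ and $t_0>1$ of Lemma~\ref{LEM_INDEX_INVARIANCE_PARTIAL} hold. That lemma then yields $j_l(\boldsymbol{x}^{*}+r_0\boldsymbol{g})=j_l(\boldsymbol{x}^{*})$ for all $l\leqslant \min\{t_0,t\}-1=t_0-1$, which is the assertion.

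Note that the case $t=1$ with $t_0>1$ cannot occur, precisely because Lemma~\ref{LEM_STOPPING_STAGE} forces $t_0\leqslant t$.

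There is no genuine obstacle here; all the substantive work is contained in the two cited lemmas. The one point to check is that Lemma~\ref{LEM_STOPPING_STAGE} is proved without appealing to the present corollary, so that the argument is not circular. If its proof relies only on Lemma~\ref{LEM_INDEX_INVARIANCE_PARTIAL} and Corollary~\ref{COR_RESIDUAL_INVARIANCE}, as the text indicates, the deduction above is complete.
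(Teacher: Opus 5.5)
Your proposal is correct and matches the paper's derivation: Lemma~\ref{LEM_STOPPING_STAGE} gives $t_0\leqslant t$, so $\min\{t_0,t\}=t_0$, and Lemma~\ref{LEM_INDEX_INVARIANCE_PARTIAL} then yields the claim, with the case $t_0=1$ vacuous. Your circularity check is warranted, because the appendix proof of Lemma~\ref{LEM_STOPPING_STAGE} does cite this very corollary; however, it does so under the contradiction hypothesis $t_0>t$ (with $t,t_0>1$), where $\min\{t_0,t\}-1=t-1$, so Lemma~\ref{LEM_INDEX_INVARIANCE_PARTIAL} alone supplies the needed invariance and the argument is not circular.
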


Lemma~\ref{LEM_STOPPING_STAGE} establishes that, once a perturbation along the admissibility 
direction $g$ causes the testing procedure to reject $H_{01}$, the rejection 
cannot occur strictly later than the stage at which the original procedure 
stopped without rejecting $H_{01}$. Thus, movement along the $g$-direction 
cannot delay the rejection of $H_{01}$. This property plays a crucial role 
in controlling the evolution of the adaptive rejection path and prevents the 
acceptance region from developing disconnected geometric structures along 
the admissibility direction.

\begin{lem}\label{LEM_INTERVAL_PROPERTY}
	Suppose that for some $\boldsymbol{x}^{*}$ and $r_0 > 0,$ $\phi_1(\boldsymbol{x}^{*})=0$ and $\phi_1(\boldsymbol{x}^{*}+r_0\boldsymbol{g})=1.$ Then $\phi_1(\boldsymbol{x}^{*}+r\boldsymbol{g})=1$ for all $r > r_0.$
\end{lem}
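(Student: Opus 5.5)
The plan is to fix $r>r_0$ and follow the step-down path at $\boldsymbol{x}^{*}+r\boldsymbol{g}$ stage by stage, comparing it with the path at $\boldsymbol{x}^{*}+r_0\boldsymbol{g}$. Let $t_0$ be the stage at which $H_{01}$ is rejected when $\boldsymbol{x}^{*}+r_0\boldsymbol{g}$ is observed, and let $t$ be the stopping stage at $\boldsymbol{x}^{*}$. By Lemma~\ref{LEM_STOPPING_STAGE} we have $t_0\leqslant t$. By Corollary~\ref{COR_INDEX_INVARIANCE}, $j_l(\boldsymbol{x}^{*}+r_0\boldsymbol{g})=j_l(\boldsymbol{x}^{*})$ for $l<t_0$. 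Write $j_1,\dots,j_{t_0-1}$ for this common history.

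\emph{Step 1: sign of the first residual at stage $t_0$.} I will show that $U^{(j_1,\dots,j_{t_0-1})}_{t_01}(\boldsymbol{x}^{*}+r_0\boldsymbol{g})>0$.

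At $\boldsymbol{x}^{*}$, stage $t_0$ is reached with the same history, and index $1$ is not rejected there. So either some competitor $j\neq1$ has $S_{t_0j}(\boldsymbol{x}^{*})>S_{t_01}(\boldsymbol{x}^{*})$, or the procedure stops, which forces $S_{t_01}(\boldsymbol{x}^{*})\leqslant C^G_{t_0}$.

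At $\boldsymbol{x}^{*}+r_0\boldsymbol{g}$, index $1$ is the argmax and exceeds $C^G_{t_0}$. By Lemma~\ref{LEM_SHIFT_PROPERTY}, the competitor scores are the same at both points because the history is the same. Hence in either case $S_{t_01}(\boldsymbol{x}^{*}+r_0\boldsymbol{g})>S_{t_01}(\boldsymbol{x}^{*})$. Since $h_{t_01}^{(j_1,\dots,j_{t_0-1})}$ is strictly increasing, this gives $|U_{t_01}(\boldsymbol{x}^{*}+r_0\boldsymbol{g})|>|U_{t_01}(\boldsymbol{x}^{*})|$.

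By Lemma~\ref{LEM_SHIFT_PROPERTY}, $U_{t_01}$ is strictly increasing along the line, since it is linear with slope $\sigma^{1/2}_{1\cdot(j_1,\dots,j_{t_0-1})}>0$. If $U_{t_01}(\boldsymbol{x}^{*}+r_0\boldsymbol{g})\leqslant 0$, then $U_{t_01}(\boldsymbol{x}^{*})<U_{t_01}(\boldsymbol{x}^{*}+r_0\boldsymbol{g})\leqslant0$. That would make $|U_{t_01}(\boldsymbol{x}^{*})|$ the larger of the two absolute values, a contradiction (Remark~\ref{REMARK_MONOTONE_STRUCTURE}). Therefore $U_{t_01}(\boldsymbol{x}^{*}+r_0\boldsymbol{g})>0$. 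Consequently, for every $r>r_0$, the value $U_{t_01}(\boldsymbol{x}^{*}+r\boldsymbol{g})$ is positive and strictly larger. It follows that $S_{t_01}(\boldsymbol{x}^{*}+r\boldsymbol{g})>S_{t_01}(\boldsymbol{x}^{*}+r_0\boldsymbol{g})$.

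\emph{Step 2: induction over stages $l=1,\dots,t_0$ at the point $\boldsymbol{x}^{*}+r\boldsymbol{g}$.} The inductive claim is: either $H_{01}$ has already been rejected, or the first $l-1$ selected indices are $j_1,\dots,j_{l-1}$ and all were rejected.

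Fix a stage $l<t_0$ for which the history is $j_1,\dots,j_{l-1}$ and index $1$ is still active. By Lemma~\ref{LEM_SHIFT_PROPERTY}, every competitor score $S_{lj}$ with $j\neq1$ equals its value at $\boldsymbol{x}^{*}+r_0\boldsymbol{g}$. There are two cases.
\begin{itemize}
\item[(a)] The argmax at $\boldsymbol{x}^{*}+r\boldsymbol{g}$ is index $1$. Then $S_{l1}(\boldsymbol{x}^{*}+r\boldsymbol{g})\geqslant S_{lj_l}(\boldsymbol{x}^{*}+r_0\boldsymbol{g})>C^G_l$. So $H_{01}$ is rejected at stage $l$, and we are done.
\item[(b)] The argmax is not index $1$. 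Among the competitors, the maximizer is the same as at $r_0$, namely $j_l$. Its score exceeds $C^G_l$, so $H_{0j_l}$ is rejected, and the history is propagated to the next stage.
\end{itemize}

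If the induction reaches stage $t_0$ with history $j_1,\dots,j_{t_0-1}$, Step 1 finishes the argument. The score $S_{t_01}$ is strictly larger than at $r_0$, while the competitors are unchanged. At $r_0$, $S_{t_01}$ already dominated every competitor and exceeded $C^G_{t_0}$, so index $1$ is the argmax and exceeds the threshold. Hence $H_{01}$ is rejected and $\phi_1(\boldsymbol{x}^{*}+r\boldsymbol{g})=1$. Ties have probability zero and are ignored, as agreed in Section~2.

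The main obstacle is Step 1. We must certify that $r_0$ lies on the increasing branch of $|U_{t_01}|$, even though the stage $t_0$, and hence the relevant conditioning set and variance $\sigma_{1\cdot(j_1,\dots,j_{t_0-1})}$, is itself data-dependent. This is exactly where Lemma~\ref{LEM_STOPPING_STAGE} and Corollary~\ref{COR_INDEX_INVARIANCE} are needed: they guarantee that $\boldsymbol{x}^{*}$ reaches stage $t_0$ along the same history, so that the comparison $|U_{t_01}(\boldsymbol{x}^{*})|<|U_{t_01}(\boldsymbol{x}^{*}+r_0\boldsymbol{g})|$ is between residuals of the same linear function of $r$.
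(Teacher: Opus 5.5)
Your proof is correct. Your Step 1 coincides with the paper's argument that $U^{(j_1,\dots,j_{t_0-1})}_{t_{0}1}(\boldsymbol{x}^{*}+r_0\boldsymbol{g})>0$; the paper splits on whether $j_{t_0}(\boldsymbol{x}^{*})=1$, which is exactly your dichotomy.

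You diverge from the paper in the concluding step. The paper argues by contradiction. It supposes $\phi_1(\boldsymbol{x}^{*}+r_1\boldsymbol{g})=0$ for some $r_1>r_0$ and treats $\boldsymbol{x}^{*}+r_1\boldsymbol{g}$ as a new acceptance point. It then re-applies Lemma~\ref{LEM_STOPPING_STAGE} and Corollary~\ref{COR_INDEX_INVARIANCE} with the negative perturbation $-(r_1-r_0)$. This gives $t_0\le t_1$, a common history up to stage $t_0-1$, and $|U_{t_{0}1}(\boldsymbol{x}^{*}+r_1\boldsymbol{g})|<|U_{t_{0}1}(\boldsymbol{x}^{*}+r_0\boldsymbol{g})|$, which contradicts the monotonicity obtained in Step 1. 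The paper also handles $t_0=1$ as a separate direct case.

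You instead run a forward induction comparing the paths at $\boldsymbol{x}^{*}+r_0\boldsymbol{g}$ and $\boldsymbol{x}^{*}+r\boldsymbol{g}$. The observation that makes this work is your case (a). If index $1$ wins at some stage $l<t_0$, it is automatically above threshold, because it beats $j_l$, whose unchanged score exceeds $C^G_l$. This means you never need the lemmas at a shifted base point.

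What each route buys:
\begin{itemize}
\item Your argument is more elementary and constructive.
\item It treats $t_0=1$ uniformly; in effect it is the paper's $t_0=1$ argument propagated stage by stage.
\item It gives slightly more information: $H_{01}$ is rejected at $\boldsymbol{x}^{*}+r\boldsymbol{g}$ no later than stage $t_0$, possibly earlier.
\item The paper's route is shorter once its lemmas are available, and it shows that those lemmas hold regardless of the sign of the perturbation.
\end{itemize}
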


\begin{proof}
	See Appendix.
\end{proof}

Lemma~\ref{LEM_INTERVAL_PROPERTY} establishes a monotonicity property of the induced decision rule 
along the admissibility direction $g$. Specifically, once a perturbation 
$\boldsymbol{x}^{*} + r_0\boldsymbol{g}$ becomes a rejection point for $H_{01}$, every further 
perturbation in the same direction must also remain a rejection point. 
Consequently, the acceptance region along the $\boldsymbol{g}$-direction must necessarily 
have an interval structure. This monotone geometric behavior is precisely 
the key condition required by the Matthes--Truax characterization of 
admissibility.

Using Lemma \ref{LEM_MATH_TRUAX} and Lemma \ref{LEM_INTERVAL_PROPERTY} above, it follows that for testing $H_{01} \mbox { vs } H_{A1}$, the individual decision $\phi_1(\boldsymbol{X})$ induced by the monotone residual-based step-down procedure is admissible with respect to the standard $0-1$ loss (\ref{INDIVIDUAL_LOSS}). The proof that the other tests induced by the procedure for the remaining individual testing problems are admissible follows analogously. Since admissibility of each individual induced decision implies admissibility of the corresponding multiple testing procedure under the vector loss (\ref{VECTOR_LOSS}), we obtain the desired admissibility property in the following theorem.

\begin{thm}\label{THM_ADMISSIBILITY_GENERAL}
	
Suppose $\boldsymbol{X}\sim N_{n}(\boldsymbol{\theta},\boldsymbol{\Sigma})$, where $\boldsymbol{\theta}\in\mathbb{R}^{n}$ is unknown, but fixed and $\boldsymbol{\Sigma}$ is an $n \times n$ arbitrary but known positive definite covariance matrix. Then, for the two sided multiple testing problem (\ref{TESTING_PROBLEM}), any step-down multiple testing procedure based on statistics $S_{tj}$'s, where each $S_{tj}$ is obtained through a locally adaptive strictly increasing transformation of the absolute value of the corresponding MRD statistic $U_{tj}$, is admissible with respect to the vector loss function \eqref{VECTOR_LOSS}.
\end{thm}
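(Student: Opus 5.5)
The plan is to reduce the vector-loss statement to admissibility of the individual induced tests, and then verify the interval condition of Lemma~\ref{LEM_MATH_TRUAX} coordinate by coordinate.

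\textbf{Reduction to a single coordinate.} Suppose $\Phi^*$ dominated $\Phi$ under \eqref{VECTOR_LOSS}. Then $R_j(\phi^*_j,\boldsymbol\theta)\le R_j(\phi_j,\boldsymbol\theta)$ holds for every $j$ and $\boldsymbol\theta$, with strict inequality for some $j_0$ and some $\boldsymbol\theta$. Hence $\phi^*_{j_0}$ dominates $\phi_{j_0}$ as a single test under \eqref{INDIVIDUAL_LOSS}. It therefore suffices to show that each $\phi_j$ is admissible for $H_{0j}$ versus $H_{Aj}$. Relabelling coordinates (permuting $\boldsymbol X$ and conjugating $\boldsymbol\Sigma$ by the same permutation) preserves the structure of the $U_{tj}$, the $h_{tj}^{(\cdot)}$ and the thresholds. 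So without loss of generality $j=1$.

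\textbf{The interval property for $\phi_1$.} With $\boldsymbol Y=\boldsymbol\Sigma^{-1}\boldsymbol X$, the family is exponential with natural parameter $\boldsymbol\Sigma^{-1}\boldsymbol\theta$. By Lemma~\ref{LEM_MATH_TRUAX}, I must show that for almost every $(y_2,\dots,y_n)$ the set $\{y_1:\phi_1=0\}$ is an interval. As noted before Lemma~\ref{LEM_SHIFT_PROPERTY}, moving $y_1$ with the other coordinates fixed is the same as moving along the line $\boldsymbol x^*+r\boldsymbol g$. So I fix $\boldsymbol x^*$ and set $A=\{r:\phi_1(\boldsymbol x^*+r\boldsymbol g)=0\}$.

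Lemma~\ref{LEM_INTERVAL_PROPERTY} says that if $r_1\in A$ and $r_1+s\notin A$ for some $s>0$, then $r_1+r\notin A$ for all $r>s$. In other words, once we leave $A$ to the right, we never return.

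For the left side I apply the mirror argument. Replacing $\boldsymbol x$ by $-\boldsymbol x$ leaves every $|U_{tj}|$, hence every $S_{tj}$ and every index $j_t$, unchanged. So $\phi_1(\boldsymbol x)=\phi_1(-\boldsymbol x)$, and the line $\boldsymbol x^*+r\boldsymbol g$ maps to $-\boldsymbol x^*-r\boldsymbol g$. Applying Lemma~\ref{LEM_INTERVAL_PROPERTY} at the point $-\boldsymbol x^*-r_1\boldsymbol g$ then shows that once we leave $A$ to the left, we also never return.

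If $A$ were not an interval, there would be $a<b<c$ with $a,c\in A$ and $b\notin A$. This contradicts the right-side statement applied with $r_1=a$. Hence $A$ is an interval, possibly empty or unbounded. This holds for every $\boldsymbol x^*$, except on the null tie set where argmaxes are ambiguous, which Lemma~\ref{LEM_MATH_TRUAX} permits us to ignore. Therefore $\phi_1$ is admissible.

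\textbf{Main obstacle.} The hard part is entirely inside Lemma~\ref{LEM_INTERVAL_PROPERTY}, which in turn rests on Lemmas~\ref{LEM_INDEX_INVARIANCE_PARTIAL} and \ref{LEM_STOPPING_STAGE}. Their role is to freeze the history $j_1,\dots,j_{t_0-1}$ along the line, so that the competing scores $S_{tj}$, $j\ne1$, are constant there by Lemma~\ref{LEM_SHIFT_PROPERTY}. Once the history is frozen, $S_{t_01}$ is strictly increasing in $r$ on the positive branch of Remark~\ref{REMARK_MONOTONE_STRUCTURE}, because $h$ is strictly increasing and $\sigma_{1\cdot(\cdot)}>0$.

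Since those lemmas are available, what remains for the theorem is to check two things. First, the sign-flip symmetry really covers the left side: all statistics depend on $\boldsymbol x$ only through absolute values of linear forms, so it does. Second, the exceptional ties form a Lebesgue-null set, so the "almost every" clause of Lemma~\ref{LEM_MATH_TRUAX} suffices.
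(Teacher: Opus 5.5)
Your proposal is correct and follows the paper's route: reduce vector-loss admissibility to admissibility of each induced test, then verify the Matthes--Truax interval condition (Lemma~\ref{LEM_MATH_TRUAX}) along the lines $\boldsymbol{x}^{*}+r\boldsymbol{g}$ via Lemma~\ref{LEM_INTERVAL_PROPERTY}, handling the other coordinates by relabelling. Two minor remarks: the sign-flip argument for the left side is valid but unnecessary, since your own $a<b<c$ contradiction uses only the right-sided property (which is why the paper states only that one); and with $\boldsymbol{Y}=\boldsymbol{\Sigma}^{-1}\boldsymbol{X}$ as sufficient statistic the natural parameter is $\boldsymbol{\theta}$ itself, because $\boldsymbol{\theta}^{T}\boldsymbol{\Sigma}^{-1}\boldsymbol{x}=\boldsymbol{\theta}^{T}\boldsymbol{y}$, not $\boldsymbol{\Sigma}^{-1}\boldsymbol{\theta}$---this is exactly why $H_{01}$ concerns a single natural-parameter coordinate and the sections are taken in $y_1$.
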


Theorem~\ref{THM_ADMISSIBILITY_GENERAL} identifies a broad structural mechanism underlying admissibility, showing that the admissibility property is fundamentally driven by the local monotone ordering structure induced by
the MRD residual statistics rather than by the precise functional form of the residual scoring rule itself. In particular, the admissibility property of the MRD procedure due to \citet{COHEN_SACK_XU_2009} follows immediately
by taking $S_{tj}=|U_{tj}|$. 

Importantly, the admissibility proof does not require the generalized, covariance-adaptive monotone residual-based procedure to preserve either the stagewise rejection ordering or the threshold structure of the original MRD procedure. Moreover, the theorem does not require the generalized residual statistics to arise from a single globally defined transformation of the MRD statistics. The admissibility result continues to hold even when the residual scoring
functions vary locally across stages, coordinates, and adaptive rejection histories. Consequently, Theorem
\ref{THM_ADMISSIBILITY_GENERAL} substantially enlarges the currently known class of admissible covariance-adaptive residual-based procedures.

From a geometric viewpoint, the residual-based construction underlying the MRD procedure may be interpreted as a sequential covariance-adapted orthogonalization mechanism. Along the admissibility directions arising from
the Matthes--Truax characterization \cite{MAT_TRU_1967}, the residual corresponding to the coordinate under consideration evolves monotonically while the competing
residual statistics remain invariant. This effectively reduces the high-dimensional adaptive step-down problem locally to a one-dimensional ordered geometric structure and induces an interval-type acceptance geometry which ultimately drives the admissibility property. In this sense,
Theorem~\ref{THM_ADMISSIBILITY_GENERAL} reveals that admissibility under arbitrary covariance dependence is fundamentally a structural consequence of the covariance-adapted residual geometry underlying the MRD construction.

\begin{remark}
	The Bayesian Step-Down (BSD) procedure proposed by \citet{GC_BSD2015} also fits naturally within the framework of Theorem~\ref{THM_ADMISSIBILITY_GENERAL}. The BSD statistics arise naturally from stagewise posterior probability based Bayesian residual scoring mechanisms under suitable prior specifications and may be expressed as locally adaptive strictly increasing transformations of the absolute values of the corresponding MRD residual statistics. Consequently, admissibility of the BSD procedure with respect to the vector loss formulation follows immediately from Theorem~\ref{THM_ADMISSIBILITY_GENERAL}.
\end{remark}


\section{Discussion}

In this paper, we established a general admissibility result for a broad class of monotone residual-based step-down multiple testing procedures under arbitrary covariance dependence within a multivariate Gaussian framework. The main result shows that admissibility with respect to the vector loss function \eqref{VECTOR_LOSS} is fundamentally a structural consequence of the covariance-adapted residual geometry underlying the MRD construction rather than a consequence of any particular residual scoring rule. In particular, admissibility persists across a remarkably broad class of highly heterogeneous and locally adaptive residual-based procedures whose stagewise scoring mechanisms may vary across coordinates, stages, and adaptive rejection histories. Consequently, the admissibility property extends far beyond the original MRD procedure introduced by \citet{COHEN_SACK_XU_2009} and remains valid for a substantially larger class of generalized residual-based procedures.

A noteworthy aspect of the present analysis is that the generalized procedures considered here may induce adaptive
stagewise rejection sequences that differ substantially from those generated by the original MRD procedure. Consequently, the admissibility proof cannot rely on any direct stagewise correspondence argument, but instead requires a careful geometric analysis of the induced acceptance regions together with structural invariance properties of the adaptive stagewise indices. From a methodological standpoint, this substantially enlarges the class of covariance-adaptive admissible multiple testing procedures and permits considerable flexibility in constructing dependence-aware residual-based step-down procedures beyond the original MRD formulation.


The proof of Theorem~\ref{THM_ADMISSIBILITY_GENERAL} further demonstrates that the admissibility phenomenon is fundamentally geometric in nature. From a structural viewpoint, Theorem~\ref{THM_ADMISSIBILITY_GENERAL} reveals that admissibility is remarkably stable under broad classes of locally adaptive monotone perturbations of the underlying covariance-adjusted residual geometry. Along the admissibility direction arising from the Matthes--Truax characterization, the covariance-adjusted residual coordinate corresponding to the hypothesis under consideration evolves monotonically while the competing residual statistics remain invariant. This effectively reduces the high-dimensional adaptive step-down problem to a locally ordered one-dimensional geometric structure. The resulting interval-type acceptance geometry ultimately drives the admissibility property. From this perspective, the present work suggests that admissibility under dependence may often be governed less by the precise algebraic form of a multiple testing rule and more by deeper monotone geometric structures induced through covariance-adjusted residual representations.


The present work naturally raises several challenging open problems. One major difficulty arises when the vector loss formulation is replaced by additive loss functions involving the total numbers of type~I and type~II errors. Under vector loss, admissibility reduces to admissibility of the induced individual tests, thereby permitting the use of geometric characterizations such as the Matthes–Truax criterion~\cite{MAT_TRU_1967}. In contrast, additive losses introduce highly nontrivial global interactions among the component decisions, and admissibility can no longer be analyzed through coordinatewise arguments. As a consequence, the structure of admissible multiple testing procedures under additive losses appears to be fundamentally more complicated, particularly under dependence.

Indeed, even under relatively simple dependence structures, a satisfactory structural characterization of admissible procedures under additive losses remains largely unknown. The difficulty becomes considerably more pronounced under arbitrary covariance dependence, where the adaptive geometry of the rejection regions may become extremely complicated. We hope that the structural ideas developed in the present paper may provide some useful insight toward future investigations in this direction.

The present work establishes a broad sufficient condition for admissibility under arbitrary covariance dependence. Whether related monotone residual ordering structures are also necessary for admissibility of covariance-adaptive step-down procedures remains an interesting open question.

Several additional extensions also appear worthy of investigation. These include admissibility theory for one-sided multiple testing problems, problems involving unknown covariance structures, and asymptotic high-dimensional settings in which the covariance matrix itself must be estimated from the data. Another important direction would be to investigate the interaction between admissibility and modern global error criteria such as the false discovery rate under general dependence structures.


The present proof fundamentally exploits the Gaussian conditional residual representation together with the Matthes–Truax characterization for exponential families. It would therefore be interesting to investigate whether the local monotone residual geometry underlying the present admissibility theory extends beyond the Gaussian setting to broader dependence structures and exponential family models. The adaptive residual statistics considered here evolve sequentially through conditioning on previously eliminated coordinates, thereby inducing a locally evolving covariance-adjusted geometric structure along the admissibility direction. This suggests intriguing connections between admissibility under dependence and broader geometric structures arising in adaptive high-dimensional inference problems. Another intriguing direction is to study whether covariance-adaptive residual-based step-down procedures can simultaneously achieve admissibility together with meaningful false discovery rate control under general dependence structures. Establishing rigorous theoretical guarantees for such phenomena remains an interesting open problem for future research.

\appendix  
\section{Appendix}
\label{app}

In this appendix, we provide the proofs of
Lemmas \ref{LEM_INDEX_INVARIANCE_PARTIAL}--
\ref{LEM_INTERVAL_PROPERTY} which were instrumental in proving Theorem~\ref{THM_ADMISSIBILITY_GENERAL} of this paper.

\begin{flushleft}    
	\textbf{Proof of Lemma \ref{LEM_INDEX_INVARIANCE_PARTIAL}}
\end{flushleft}

\begin{proof}
We shall prove the result using the method of induction.\vspace{2mm}

First, we establish the result when $l=1$.\vspace{2mm}

Observe that since both $t>1$ and $t_0>1$, one must have
$$
j_1(\boldsymbol{x}^{*}+r_0\boldsymbol{g})\neq 1
\quad \text{and} \quad
j_1(\boldsymbol{x}^{*})\neq 1.
$$

Then, using Corollary \ref{COR_RESIDUAL_INVARIANCE}, we obtain
\begin{eqnarray}\label{LEM_INDEX_INVARIANCE_PARTIAL_EQ1}
j_1(\boldsymbol{x}^{*}+r_{0}\boldsymbol{g})
&=& \argmax_{j \in \{2,\dots,n\}} S_{1j}(\boldsymbol{x}^{*}+r_{0}\boldsymbol{g})\nonumber\\
&=& \argmax_{j \in \{2,\dots,n\}} S_{1j}(\boldsymbol{x}^{*})\nonumber\\
&=& j_1(\boldsymbol{x}^{*}).
\end{eqnarray}

Hence, the result is true for $l=1$.\vspace{2mm}

Now suppose that, for some $2\leq l\leq \min\{t_0,t\}-1$,
\begin{equation}\label{LEM_INDEX_INVARIANCE_PARTIAL_INDUCTION}
j_k(\boldsymbol{x}^{*}+r_0\boldsymbol{g})=j_k(\boldsymbol{x}^{*}),
\ \textrm{ for all } k=1,\ldots,l-1.
\end{equation}

By the induction hypothesis \eqref{LEM_INDEX_INVARIANCE_PARTIAL_INDUCTION}, the active coordinate sets and the corresponding conditioning index sets coincide at $\boldsymbol{x}^{*}+r_0\boldsymbol{g}$ and $\boldsymbol{x}^{*}$ up to stage $l-1$, which, for $l=2$, follows directly from \eqref{LEM_INDEX_INVARIANCE_PARTIAL_EQ1}.

Our aim is now to show that
$$ j_l(\boldsymbol{x}^{*}+r_0\boldsymbol{g}) = j_l(\boldsymbol{x}^{*}), \textrm{ for all } l=2,\dots, \min\{t_0,t\}-1.$$

Now using Lemma \ref{LEM_SHIFT_PROPERTY}, it follows that for all $ j\in\{2,\dots,n\}\setminus
\{j_1(\boldsymbol{x}^{*}+r_0\boldsymbol{g}),\dots,
	j_{l-1}(\boldsymbol{x}^{*}+r_0\boldsymbol{g})\},$
\begin{equation}
U_{lj}^{(j_1(\boldsymbol{x}^{*}+r_{0}\boldsymbol{g}),\dots,j_{l-1}(\boldsymbol{x}^{*}+r_{0}\boldsymbol{g}))}(\boldsymbol{x}^{*}+r_{0}\boldsymbol{g})=U_{lj}^{(j_1(\boldsymbol{x}^{*}+r_{0}\boldsymbol{g}),\dots,j_{l-1}(\boldsymbol{x}^{*}+r_{0}\boldsymbol{g}))}(\boldsymbol{x}^{*}).\notag
\end{equation}
	
Therefore, we obtain for all $j\in\{2,\dots,n\} \setminus \{j_1(\boldsymbol{x}^{*}+r_{0}\boldsymbol{g}),\dots,j_{l-1}(\boldsymbol{x}^{*}+r_{0}\boldsymbol{g})\},$
\begin{equation}\label{LEM_INDEX_INVARIANCE_PARTIAL_EQ3}
S_{lj}^{(j_1(\boldsymbol{x}^{*}+r_{0}\boldsymbol{g}),\dots,j_{l-1}(\boldsymbol{x}^{*}+r_{0}\boldsymbol{g}))}(\boldsymbol{x}^{*}+r_{0}\boldsymbol{g})=S_{lj}^{(j_1(\boldsymbol{x}^{*}+r_{0}\boldsymbol{g}),\dots,j_{l-1}(\boldsymbol{x}^{*}+r_{0}\boldsymbol{g}))}(\boldsymbol{x}^{*}).
\end{equation}

Again using Lemma \ref{LEM_SHIFT_PROPERTY} coupled with the induction hypothesis \eqref{LEM_INDEX_INVARIANCE_PARTIAL_INDUCTION}, we have
\begin{eqnarray}
U^{(j_1(\boldsymbol{x}^{*}+r_{0}\boldsymbol{g}),\dots,j_{l-1}(\boldsymbol{x}^{*}+r_{0}\boldsymbol{g}))}_{l1}(\boldsymbol{x}^{*} + r\boldsymbol{g})
	&=& U^{(j_1(\boldsymbol{x}^{*}+r_{0}\boldsymbol{g}),\dots,j_{l-1}(\boldsymbol{x}^{*}+r_{0}\boldsymbol{g}))}_{l1}(\boldsymbol{x}^{*})\notag\\
	&& \quad +
	r\sigma^{\frac{1}{2}}_{1\cdot( j_1(\boldsymbol{x}^{*}+r_{0}\boldsymbol{g}),\dots,j_{l-1}(\boldsymbol{x}^{*}+r_{0}\boldsymbol{g}))}\nonumber\\
	&=& U^{(j_1(\boldsymbol{x}^{*}),\dots,j_{l-1}(\boldsymbol{x}^{*}))}_{l1}(\boldsymbol{x}^{*}) +
	r\sigma^{\frac{1}{2}}_{1\cdot( j_1(\boldsymbol{x}^{*}),\dots,j_{l-1}(\boldsymbol{x}^{*}))},\nonumber
\end{eqnarray}
which shows that, for each $1<l<\min\{t_0,t\}$, the perturbation affects only the statistic corresponding to coordinate $1$.

Since $H_{01}$ is rejected only at stage $t_0$ when $\boldsymbol{x}^{*}+r_0\boldsymbol{g}$ is observed, for every $1 <l<\min\{t_0,t\}$, the statistic
$$
S_{l1}^{(j_1(\boldsymbol{x}^{*}+r_0\boldsymbol{g}),\dots,
	j_{l-1}(\boldsymbol{x}^{*}+r_0\boldsymbol{g}))}
(\boldsymbol{x}^{*}+r_0\boldsymbol{g})
$$
cannot attain the maximum among the active statistics at stage $l$.
Consequently, the maximizing index $j_{l}(\boldsymbol{x}^{*}+r_0\boldsymbol{g})$ at stage $l<t_0$ when $\boldsymbol{x}^{*}+r_0\boldsymbol{g}$ is observed must belong to the competing set
$$
\{2,\dots,n\}\setminus
\{j_1(\boldsymbol{x}^{*}+r_0\boldsymbol{g}),\dots,
	j_{l-1}(\boldsymbol{x}^{*}+r_0\boldsymbol{g})\},
$$
whose associated statistics remain invariant under the perturbation.

Hence, using \eqref{LEM_INDEX_INVARIANCE_PARTIAL_EQ1},
\eqref{LEM_INDEX_INVARIANCE_PARTIAL_EQ3}, and the induction hypothesis
\eqref{LEM_INDEX_INVARIANCE_PARTIAL_INDUCTION}, we obtain
\begin{eqnarray}
j_l(\boldsymbol{x}^{*}+r_{0}\boldsymbol{g})
&=& \argmax_{j \in \{2,\dots,n\}\setminus\{j_1(\boldsymbol{x}^{*}+r_{0}\boldsymbol{g}), \dots, j_{l-1}(\boldsymbol{x}^{*}+r_{0}\boldsymbol{g})\}} S_{lj}^{(j_1(\boldsymbol{x}^{*}+r_{0}\boldsymbol{g}),\dots, j_{l-1}(\boldsymbol{x}^{*}+r_{0}\boldsymbol{g}))}(\boldsymbol{x}^{*}+r_{0}\boldsymbol{g})\nonumber\\
&=& \argmax_{j \in \{2,\dots,n\}\setminus\{j_1(\boldsymbol{x}^{*}+r_{0}\boldsymbol{g}),\dots, j_{l-1}(\boldsymbol{x}^{*}+r_{0}\boldsymbol{g})\}} S_{lj}^{(j_1(\boldsymbol{x}^{*}+r_{0}\boldsymbol{g}),\dots, j_{l-1}(\boldsymbol{x}^{*}+r_{0}\boldsymbol{g}))}(\boldsymbol{x}^{*})\nonumber\\
&=& \argmax_{j \in \{2,\dots,n\}\setminus\{j_1(\boldsymbol{x}^{*}), \dots, j_{l-1}(\boldsymbol{x}^{*})\}} S_{lj}^{(j_1(\boldsymbol{x}^{*}), \dots, j_{l-1}(\boldsymbol{x}^{*}))}(\boldsymbol{x}^{*})\nonumber\\
&=& j_l(\boldsymbol{x}^{*}),\nonumber
\end{eqnarray}
for all $l=2,\dots,\min\{t_0,t\}-1$. This completes the proof of Lemma \ref{LEM_INDEX_INVARIANCE_PARTIAL}.	
\end{proof}

\begin{flushleft}
	\textbf{Proof of Lemma \ref{LEM_STOPPING_STAGE}}
\end{flushleft}
\begin{proof}
	
First observe that, when $t_0=1$, the conclusion $t_0\le t$ holds trivially, since $t\ge 1$. On the other hand, if $t=1$, then the procedure stops at the first stage when $\boldsymbol{x}^{*}$ is observed, so $S_{1j}(\boldsymbol{x}^{*})\le C_1^G$ for all $j=1,\dots,n$. In that case, if $t_0>1$, then at stage 1, some coordinate $k\ne 1$ must be rejected when
$\boldsymbol{x}^{*}+r_0\boldsymbol{g}$ is observed, so
$S_{1k}(\boldsymbol{x}^{*}+r_0\boldsymbol{g})>C_1^G$. But by Corollary~\ref{COR_RESIDUAL_INVARIANCE},
$S_{1k}(\boldsymbol{x}^{*}+r_0\boldsymbol{g})=S_{1k}(\boldsymbol{x}^{*})$, a contradiction. Hence $t_0=1$, and
therefore $t_0\le t$.

Thus it remains to consider the case $t>1$ and $t_0>1$.
	
Since $t>1$, the process does not stop in the first $(t-1)$ steps without rejecting $H_{01}$ when $\boldsymbol{x}^{*}$ is observed. Consequently, we must have
\begin{eqnarray}
S_{lj_l(\boldsymbol{x}^{*})}^{(j_1(\boldsymbol{x}^{*}),\dots,j_{l-1}(\boldsymbol{x}^{*}))}(\boldsymbol{x}^{*}) > C^{G}_{l}\nonumber
\end{eqnarray}
for all $l=1,\dots,t-1$, where
$j_l(\boldsymbol{x}^{*}) \neq 1$ for each $l$, and
	\begin{eqnarray}
		S_{tj_{t}(\boldsymbol{x}^{*})}^{(j_1(\boldsymbol{x}^{*}),\dots,j_{t-1}(\boldsymbol{x}^{*}))}(\boldsymbol{x}^{*})
		\leqslant
		C^{G}_{t},
		\nonumber
	\end{eqnarray}
	which implies that
	\begin{eqnarray}
		S_{tj}^{(j_1(\boldsymbol{x}^{*}),\dots,j_{t-1}(\boldsymbol{x}^{*}))}(\boldsymbol{x}^{*})
		\leqslant
		C^{G}_{t}
		\nonumber
	\end{eqnarray}
	for all
	$j \in \{1,\dots,n\}\setminus\{j_1(\boldsymbol{x}^{*}),\dots,j_{t-1}(\boldsymbol{x}^{*})\}$.
	Since
	$j_l(\boldsymbol{x}^{*}) \neq 1$
	for all $l=1,\dots,t-1$, the index $1$ remains active at stage $t$.
	
	
	On the contrary, let us now assume that $t_0>t$. Then
	\begin{eqnarray}\label{LEM_STOPPING_STAGE_EQ1}
		S_{tj_t(\boldsymbol{x}^{*}+r_{0}\boldsymbol{g})}^{(j_1(\boldsymbol{x}^{*}+r_{0}\boldsymbol{g}),\dots,j_{t-1}(\boldsymbol{x}^{*}+r_{0}\boldsymbol{g}))}(\boldsymbol{x}^{*}+r_{0}\boldsymbol{g})>	C^{G}_{t}, 
	\end{eqnarray}
	otherwise the process would have stopped at stage $t$ without rejecting $H_{01}$ when $\boldsymbol{x}^{*}+r_{0}\boldsymbol{g}$ is observed, which would be a contradiction.
	
%
	
	
Let
\begin{equation}
	k=j_t(\boldsymbol{x}^{*}+r_0\boldsymbol{g}).\notag
\end{equation}

Since we are assuming that $t_0>t$, the hypothesis $H_{01}$ is not rejected at stage $t$ when $\boldsymbol{x}^{*}+r_0\boldsymbol{g}$ is observed. Hence $k\neq 1$.
Moreover, by Corollary \ref{COR_INDEX_INVARIANCE},
\begin{equation}\label{LEM_STOPPING_STAGE_EQ2}
	j_l(\boldsymbol{x}^{*}+r_0\boldsymbol{g})=j_l(\boldsymbol{x}^{*}),
	\ \textrm{ for } l=1,\dots,t-1.
\end{equation}

Therefore $k$ remains active at stage $t$ when $\boldsymbol{x}^{*}$ is observed. That is, $H_{0k}$ is not rejected, at least, until stage $t$ when $\boldsymbol{x}^{*}$ is observed. Since $k\neq 1$, using Lemma \ref{LEM_SHIFT_PROPERTY} we obtain
\begin{eqnarray}\label{LEM_STOPPING_STAGE_EQ3}
U_{tk}^{(j_1(\boldsymbol{x}^{*}+r_0\boldsymbol{g}),\dots,
		j_{t-1}(\boldsymbol{x}^{*}+r_0\boldsymbol{g}))}
	(\boldsymbol{x}^{*}+r_0\boldsymbol{g})
	&=&
	U_{tk}^{(j_1(\boldsymbol{x}^{*}+r_0\boldsymbol{g}),\dots,
		j_{t-1}(\boldsymbol{x}^{*}+r_0\boldsymbol{g}))}
	(\boldsymbol{x}^{*})
	\nonumber\\
	&=&
	U_{tk}^{(j_1(\boldsymbol{x}^{*}),\dots,
		j_{t-1}(\boldsymbol{x}^{*}))}
	(\boldsymbol{x}^{*}).
\end{eqnarray}

It is important to observe that, due to \eqref{LEM_STOPPING_STAGE_EQ2}, the locally adaptive transformation functions corresponding to the generalized residual statistics in \eqref{LEM_STOPPING_STAGE_EQ3} also coincide at
$\boldsymbol{x}^{*}+r_0\boldsymbol{g}$ and $\boldsymbol{x}^{*}$. Therefore, since each generalized residual score is
obtained through the same strictly increasing transformation applied to the
absolute value of the corresponding residual statistic, \eqref{LEM_STOPPING_STAGE_EQ3} coupled with \eqref{LEM_STOPPING_STAGE_EQ1}, implies
\begin{eqnarray}\label{LEM_STOPPING_STAGE_EQ4}
S_{tk}^{(j_1(\boldsymbol{x}^{*}),\dots,j_{t-1}(\boldsymbol{x}^{*}))}
	(\boldsymbol{x}^{*})
	&=&
	S_{tk}^{(j_1(\boldsymbol{x}^{*}+r_0\boldsymbol{g}),\dots,
		j_{t-1}(\boldsymbol{x}^{*}+r_0\boldsymbol{g}))}
	(\boldsymbol{x}^{*}+r_0\boldsymbol{g})
	\nonumber\\
	&>& C_t^G.
\end{eqnarray}


But $k$ is active at stage $t$ when $\boldsymbol{x}^{*}$ is observed. Since the procedure stops at stage $t$ under $\boldsymbol{x}^{*}$, we must have
\begin{equation}
	S_{tk}^{(j_1(\boldsymbol{x}^{*}),\dots,j_{t-1}(\boldsymbol{x}^{*}))}(\boldsymbol{x}^{*})
	\le C_t^G,\nonumber
\end{equation}
which contradicts \eqref{LEM_STOPPING_STAGE_EQ4}. This completes the proof of Lemma~\ref{LEM_STOPPING_STAGE}.
\end{proof}

\begin{flushleft}
	\textbf{Proof of Lemma \ref{LEM_INTERVAL_PROPERTY}}
\end{flushleft}

\begin{proof}
Since $\phi_1(\boldsymbol{x}^{*})=0$, the point $\boldsymbol{x}^{*}$ belongs to the acceptance region of $H_{01}$. Hence, the testing procedure must stop before rejecting $H_{01}$. Let the testing procedure stop at some stage $t$ without rejecting $H_{01}$ when $\boldsymbol{x}^{*}$ is observed.

Again, since $\phi_1(\boldsymbol{x}^{*}+r_0\boldsymbol{g})=1$, the point $\boldsymbol{x}^{*}+r_0\boldsymbol{g}$ belongs to the rejection region of $H_{01}$. Let the testing procedure reject $H_{01}$ at some stage $t_0$ when $\boldsymbol{x}^{*}+r_0\boldsymbol{g}$ is observed.
	
Let us first consider the situation when $t_0>1$.
	
By Lemma~\ref{LEM_STOPPING_STAGE}, we have $t_0\le t$, and hence the
	indices $j_1(\boldsymbol{x}^{*}),\ldots,j_{t_0}(\boldsymbol{x}^{*})$ are well-defined.
	
Observe first that if
\begin{equation}
	j_{t_0}(\boldsymbol{x}^{*})=1,\nonumber
\end{equation}
the test fails to reject $H_{01}$ at stage $t_{0}$ when $\boldsymbol{x}^{*}$ is observed. Consequently,
\begin{eqnarray}
S_{t_{0}1}^{(j_1(\boldsymbol{x}^{*}),\dots,j_{t_{0}-1}(\boldsymbol{x}^{*}))}(\boldsymbol{x}^{*})
&=&	S_{t_{0}j_{t_0}(\boldsymbol{x}^{*})}^{(j_1(\boldsymbol{x}^{*}),\dots,j_{t_{0}-1}(\boldsymbol{x}^{*}))}(\boldsymbol{x}^{*})\nonumber\\
&\leqslant& C_{t_0}^G \nonumber\\
&<& S_{t_{0}j_{t_0}(\boldsymbol{x}^{*}+r_{0}\boldsymbol{g})}^{(j_1(\boldsymbol{x}^{*}+r_{0}\boldsymbol{g}),\dots,
	j_{t_{0}-1}(\boldsymbol{x}^{*}+r_{0}\boldsymbol{g}))}
(\boldsymbol{x}^{*}+r_{0}\boldsymbol{g})\nonumber\\
&=& S_{t_{0}1}^{(j_1(\boldsymbol{x}^{*}+r_{0}\boldsymbol{g}),\dots,
		j_{t_{0}-1}(\boldsymbol{x}^{*}+r_{0}\boldsymbol{g}))}
	(\boldsymbol{x}^{*}+r_{0}\boldsymbol{g})\nonumber\\
&=& S_{t_{0}1}^{(j_1(\boldsymbol{x}^{*}),\dots,
		j_{t_{0}-1}(\boldsymbol{x}^{*}))}
	(\boldsymbol{x}^{*}+r_{0}\boldsymbol{g}),
	\nonumber
\end{eqnarray}
where the strict inequality in the above chain of inequalities follows from the fact that $H_{01}$ is rejected at stage $t_0$ when $\boldsymbol{x}^{*}+r_0\boldsymbol{g}$ is observed.

Otherwise, if
\begin{equation}
	j_{t_0}(\boldsymbol{x}^{*})\neq 1,\nonumber
\end{equation}
then using Corollary~\ref{COR_RESIDUAL_INVARIANCE} together with Corollary \ref{COR_INDEX_INVARIANCE} we have
\begin{eqnarray}
S_{t_{0}1}^{(j_1(\boldsymbol{x}^{*}),\dots,j_{t_{0}-1}(\boldsymbol{x}^{*}))}(\boldsymbol{x}^{*})	
&<& S_{t_{0}j_{t_{0}}(\boldsymbol{x}^{*})}^{(j_1(\boldsymbol{x}^{*}),\dots,
		j_{t_{0}-1}(\boldsymbol{x}^{*}))}(\boldsymbol{x}^{*})\nonumber\\
&=& S_{t_{0}j_{t_{0}}(\boldsymbol{x}^{*})}^{(j_1(\boldsymbol{x}^{*}),\dots,
		j_{t_{0}-1}(\boldsymbol{x}^{*}))}(\boldsymbol{x}^{*}+r_{0}\boldsymbol{g})
	\nonumber\\
&=& S_{t_{0}j_{t_{0}}(\boldsymbol{x}^{*})}^{(j_1(\boldsymbol{x}^{*}+r_{0}\boldsymbol{g}),\dots,
		j_{t_{0}-1}(\boldsymbol{x}^{*}+r_{0}\boldsymbol{g}))}
	(\boldsymbol{x}^{*}+r_{0}\boldsymbol{g})
	\nonumber\\
&\leqslant&
	S_{t_{0}1}^{(j_1(\boldsymbol{x}^{*}+r_{0}\boldsymbol{g}),\dots,
		j_{t_{0}-1}(\boldsymbol{x}^{*}+r_{0}\boldsymbol{g}))}
	(\boldsymbol{x}^{*}+r_{0}\boldsymbol{g})
	\nonumber\\
&=&
	S_{t_{0}1}^{(j_1(\boldsymbol{x}^{*}),\dots,
		j_{t_{0}-1}(\boldsymbol{x}^{*}))}
	(\boldsymbol{x}^{*}+r_{0}\boldsymbol{g}).
	\nonumber
\end{eqnarray}	
	
	
Thus, in either case, it is shown that
\begin{eqnarray}
	S_{t_{0}1}^{(j_1(\boldsymbol{x}^{*}),\dots,j_{t_{0}-1}(\boldsymbol{x}^{*}))}(\boldsymbol{x}^{*})	
	&<& 	S_{t_{0}1}^{(j_1(\boldsymbol{x}^{*}),\dots,
		j_{t_{0}-1}(\boldsymbol{x}^{*}))}
	(\boldsymbol{x}^{*}+r_{0}\boldsymbol{g}).
	\nonumber
\end{eqnarray}	

By Corollary \ref{COR_INDEX_INVARIANCE}, the adaptive histories up to stage $(t_0-1)$ coincide at
$\boldsymbol{x}^{*}$ and $\boldsymbol{x}^{*}+r_0\boldsymbol{g}$. Hence the locally adaptive transformation function
associated with the statistic
\begin{equation}
S^{(j_1(\boldsymbol{x}^{*}),\ldots,j_{t_0-1}(\boldsymbol{x}^{*}))}_{t_0 1}\notag
\end{equation}
is the same at the two points $\boldsymbol{x}^{*}$ and $\boldsymbol{x}^{*}+r_0\boldsymbol{g}$. Therefore, the preceding
comparison of the corresponding $S$-statistics may be equivalently translated
into a comparison of the absolute residual statistics.	
	
Since for fixed $j_1,\dots,j_{t_{0}-1},$ $S_{t_{0}1}^{(j_1,\dots,j_{t_{0}-1})}$ is a strictly increasing function of
	$|U_{t_{0}1}^{(j_1,\dots,j_{t_{0}-1})}|,$ it follows that
	\begin{align}
		|U_{t_{0}1}^{(j_1(\boldsymbol{x}^{*}),\dots,j_{t_{0}-1}(\boldsymbol{x}^{*}))}(\boldsymbol{x}^{*})|
		&< |U_{t_{0}1}^{(j_1(\boldsymbol{x}^{*}),\dots,j_{t_{0}-1}(\boldsymbol{x}^{*}))}(\boldsymbol{x}^{*}+r_{0}\boldsymbol{g})|.\label{LEM_SHIFT_PROPERTY_EQ1}
	\end{align}

We now show that
\begin{equation}
	U^{(j_1(\boldsymbol{x}^{*}),\ldots,j_{t_0-1}(\boldsymbol{x}^{*}))}_{t_01}
	(\boldsymbol{x}^{*}+r_0\boldsymbol{g})>0.\notag
\end{equation}
On the contrary, suppose the above claim is not true. Since
\begin{equation}
U^{(j_1(\boldsymbol{x}^{*}),\ldots,j_{t_0-1}(\boldsymbol{x}^{*}))}_{t_01}
(\boldsymbol{x}^{*}+r\boldsymbol{g})\notag
\end{equation}
is strictly increasing in $r$ and $r_0>0$, we must have
\begin{equation}
	U^{(j_1(\boldsymbol{x}^{*}),\ldots,j_{t_0-1}(\boldsymbol{x}^{*}))}_{t_01}
	(\boldsymbol{x}^{*})
	<
	U^{(j_1(\boldsymbol{x}^{*}),\ldots,j_{t_0-1}(\boldsymbol{x}^{*}))}_{t_01}
	(\boldsymbol{x}^{*}+r_0\boldsymbol{g})
	\le 0.\notag
\end{equation}
The preceding step coupled with Remark
\ref{REMARK_MONOTONE_STRUCTURE} would imply
\begin{equation}
|U^{(j_1(\boldsymbol{x}^{*}),\ldots,j_{t_0-1}(\boldsymbol{x}^{*}))}_{t_01}
	(\boldsymbol{x}^{*})|
	>
|U^{(j_1(\boldsymbol{x}^{*}),\ldots,j_{t_0-1}(\boldsymbol{x}^{*}))}_{t_01}
	(\boldsymbol{x}^{*}+r_0\boldsymbol{g})|,\notag
\end{equation}
contradicting \eqref{LEM_SHIFT_PROPERTY_EQ1}.


Thus we obtain
\begin{equation}
	U^{(j_1(\boldsymbol{x}^{*}),\dots,j_{t_0-1}(\boldsymbol{x}^{*}))}_{t_01}
	(\boldsymbol{x}^{*}+r_0\boldsymbol{g})>0.\notag
\end{equation}
	
	
	But for given $(j_1,\dots,j_{t_{0}-1})$, the function $U_{t_{0}1}^{(j_1,\dots,j_{t_{0}-1})}(\boldsymbol{x}^{*}+r\boldsymbol{g})$ is strictly increasing in $r$. Hence for all $r > r_0,$ we have
	\begin{eqnarray}\label{LEM_SHIFT_PROPERTY_EQ2}
		U_{t_{0}1}^{(j_1(\boldsymbol{x}^{*}),\dots,j_{t_{0}-1}(\boldsymbol{x}^{*}))}(\boldsymbol{x}^{*}+r\boldsymbol{g})
		&>& U_{t_{0}1}^{(j_1(\boldsymbol{x}^{*}),\dots,j_{t_{0}-1}(\boldsymbol{x}^{*}))}(\boldsymbol{x}^{*}+r_{0}\boldsymbol{g})>0.
	\end{eqnarray}
	
	We now complete the proof now based on a contrapositive argument. Recall that, we need to show $\phi_1(\boldsymbol{x}^{*}+r\boldsymbol{g})=1$ for all $r > r_0.$ On the contrary, suppose this is not true. Then there exists some $r_1 > r_0 $ such that $\phi_1(\boldsymbol{x}^{*}+r_1\boldsymbol{g})=0.$ Let $t_1$ denote the step at which the testing procedure must stop without rejecting $H_{01}$ when $\boldsymbol{x}^{*}+r_1\boldsymbol{g}$ is observed.
	

Let us now view $\boldsymbol{x}^{*}+r_{1}\boldsymbol{g}$ as the new acceptance point and observe that
\begin{equation}\notag
	\boldsymbol{x}^{*}+r_{0}\boldsymbol{g}
	=
	(\boldsymbol{x}^{*}+r_{1}\boldsymbol{g})
	-
	(r_{1}-r_{0})\boldsymbol{g},
\end{equation}
where $r_{1}-r_{0}>0$. Thus, relative to the new base point
$\boldsymbol{x}^{*}+r_{1}\boldsymbol{g}$, the point
$\boldsymbol{x}^{*}+r_{0}\boldsymbol{g}$ becomes a rejection point obtained through a nonzero perturbation along the admissibility direction $\boldsymbol{g}$ with perturbation
parameter $-(r_{1}-r_{0})$. Since Lemma \ref{LEM_INDEX_INVARIANCE_PARTIAL} and Lemma \ref{LEM_STOPPING_STAGE} together with Corollary \ref{COR_INDEX_INVARIANCE} remain valid for arbitrary nonzero perturbations, the preceding
arguments continue to hold under this reversed perturbation as well. Consequently
%
\begin{equation}
	t_0 \le t_1.
	\notag
\end{equation}
Similarly, applying Corollary \ref{COR_INDEX_INVARIANCE} with the same base point $\boldsymbol{x}^{*}+r_{1}\boldsymbol{g}$ and perturbation $-(r_{1}-r_{0})$ together with the assumption $t_{0}> 1$, we obtain
\begin{equation}
	j_l(\boldsymbol{x}^{*} + r_1 \boldsymbol{g})
	=
	j_l(\boldsymbol{x}^{*} + r_0 \boldsymbol{g})
	=
	j_l(\boldsymbol{x}^{*})
	\notag
\end{equation}
for all $l = 1,\ldots,t_0-1$.
	
Applying the preceding arguments with the new base point $\boldsymbol{x}^{*}+r_1\boldsymbol{g}$ and the reversed perturbation $-(r_{1}-r_{0})$, we obtain 
	\begin{eqnarray}
		|U_{t_{0}1}^{(j_1(\boldsymbol{x}^{*}),\dots,j_{t_{0}-1}(\boldsymbol{x}^{*}))}(\boldsymbol{x}^{*}+r_1\boldsymbol{g})|
		&<& |U_{t_{0}1}^{(j_1(\boldsymbol{x}^{*}),\dots,j_{t_{0}-1}(\boldsymbol{x}^{*}))}(\boldsymbol{x}^{*}+r_{0}\boldsymbol{g})|\nonumber
	\end{eqnarray}
	which contradicts (\ref{LEM_SHIFT_PROPERTY_EQ2}). Therefore one must have $\phi_1(\boldsymbol{x}^{*}+r\boldsymbol{g})=1 $ for all $r>r_0$, when $t_0>1$.
	
Next consider the case when $t_0=1$. Then, by definition,
\begin{equation}
j_1(\boldsymbol{x}^{*}+r_{0}\boldsymbol{g})=1,\notag
\end{equation}
and hence
\begin{equation}
S_{11}(\boldsymbol{x}^{*}+r_{0}\boldsymbol{g})>C_1^G.\notag
\end{equation}

Since $\phi_1(\boldsymbol{x}^{*})=0$, the procedure does not reject $H_{01}$ when
$\boldsymbol{x}^{*}$ is observed. Therefore,
\begin{equation}
S_{11}(\boldsymbol{x}^{*})\le C_1^G.\notag
\end{equation}

Consequently,
\begin{equation}
S_{11}(\boldsymbol{x}^{*})<S_{11}(\boldsymbol{x}^{*}+r_{0}\boldsymbol{g}).\notag
\end{equation}

Now, by Corollary~\ref{COR_RESIDUAL_INVARIANCE},
\begin{equation}
S_{1j}(\boldsymbol{x}^{*}+r \boldsymbol{g})=S_{1j}(\boldsymbol{x}^{*})
\quad \text{for all } j\in\{2,\dots,n\}
\text{ and all } r\in\mathbb{R}.\nonumber
\end{equation}
Thus, along the admissibility direction $\boldsymbol{g}$, the competing first-stage
statistics remain invariant, while only the statistic corresponding to
coordinate $1$ evolves with $r$.

Since $S_{11}$ is a strictly increasing function of $|U_{11}|$, and
$U_{11}(\boldsymbol{x}^{*}+r\boldsymbol{g})$ is strictly increasing in $r$, Remark \ref{REMARK_MONOTONE_STRUCTURE} implies that
\begin{equation}
U_{11}(\boldsymbol{x}^{*}+r_0\boldsymbol{g})>0.\notag
\end{equation}

Hence, for all $r>r_0$,
\begin{equation}\label{LEM_SHIFT_PROPERTY_EQ4}
U_{11}(\boldsymbol{x}^{*}+r\boldsymbol{g})>U_{11}(\boldsymbol{x}^{*}+r_0\boldsymbol{g})>0.
\end{equation}

Since $t_{0}=1$, \eqref{LEM_SHIFT_PROPERTY_EQ4} above implies for all $r>r_0$,
\begin{equation}\label{LEM_SHIFT_PROPERTY_EQ5}
S_{11}(\boldsymbol{x}^{*}+r\boldsymbol{g})>S_{11}(\boldsymbol{x}^{*}+r_0\boldsymbol{g})> C^{G}_{1}.
\end{equation}

Since $t_0=1$, using the preceding arguments together with Corollary \ref{COR_RESIDUAL_INVARIANCE}, for all $r > r_0$, and for all $j\in\{2,\dots,n\}$, we obtain
\begin{eqnarray}\label{LEM_SHIFT_PROPERTY_EQ6}
	S_{11}(\boldsymbol{x}^{*}+r\boldsymbol{g})
	&>& S_{11}(\boldsymbol{x}^{*}+r_{0}\boldsymbol{g})\nonumber\\
	&\geqslant& S_{1j}(\boldsymbol{x}^{*}+r_{0}\boldsymbol{g})\nonumber\\
	&=& S_{1j}(\boldsymbol{x}^{*})\nonumber\\
	&=& S_{1j}(\boldsymbol{x}^{*}+r\boldsymbol{g}).
\end{eqnarray}

\eqref{LEM_SHIFT_PROPERTY_EQ5} and \eqref{LEM_SHIFT_PROPERTY_EQ6} together imply that every $\boldsymbol{x}^{*}+r\boldsymbol{g}$ will be a point of rejection for $H_{01}$ for all $r > r_0$, that is, $\phi_1(\boldsymbol{x}^{*}+r\boldsymbol{g})=1 $ for all $r > r_0$ when $t_0=1$. This completes the proof of Lemma \ref{LEM_INTERVAL_PROPERTY}.

%
%
%
	
\end{proof}



\bibliography{admissibility_reference_fixed.bib}

\end{document}